\newcommand{\ubar}[1]{\underaccent{\bar}{#1}}
\def\@tvsp{\mathchoice{{}\mkern-4.5mu}{{}\mkern-4.5mu}{{}\mkern-2.5mu}{}}
\def\ltrivert{\left|\@tvsp\left|\@tvsp\left|}
\def\rtrivert{\right|\@tvsp\right|\@tvsp\right|}
\newcommand\tnorm[1]{\ltrivert#1\rtrivert}
\patchcmd{\@maketitle}{\LARGE \@title}{\LARGE\bfseries\@title}{}{}
\renewcommand{\@seccntformat}[1]{\csname the#1\endcsname.\quad}
\definecolor{darkblue}{rgb}{0,0,.5}
\def\th@plain{%
	\thm@notefont{}
	\itshape 
}
\def\th@definition{%
	\thm@notefont{}
	\normalfont 
}
\renewenvironment{proof}[1][\proofname]{\par
	\normalfont
	\topsep0\p@\@plus3\p@ \trivlist
	\item[\hskip\labelsep\itshape
	#1\@addpunct{.}]\ignorespaces
}{%
	\qed\endtrivlist
}
\newtheorem{theorem}{Theorem}[section]
\newtheorem{lemma}[theorem]{Lemma}
\newtheorem{proposition}[theorem]{Proposition}
\newtheorem{assumption}[theorem]{Assumption}
\theoremstyle{definition}
\theoremstyle{definition}
\newtheorem{example}[theorem]{Example}
\theoremstyle{definition}
\newtheorem{remark}[theorem]{Remark}
\theoremstyle{definition}
\newtheorem{algorithm}{Algorithm}
\renewcommand\theenumi{(\roman{enumi})}
\renewcommand{\labelenumi}{\rm (\roman{enumi})}
\newcommand{\argmin}{\ensuremath{\operatorname*{argmin}}}
\newcommand{\dom}{\ensuremath{\operatorname{dom}}}
\newcommand{\gra}{\ensuremath{\operatorname{gra}}}
\newcommand{\prox}{\ensuremath{\operatorname{Prox}}}
\newcommand{\dist}{\ensuremath{\operatorname{dist}}}
\newcommand{\rank}{\ensuremath{\operatorname{rank}}}
\newcounter{step}[algorithm]
\newcommand\step[1]{%
	\refstepcounter{step}	
	\vskip 0.25\baselineskip
	\ifx\hfuzz#1\hfuzz
		\item[~\(\triangleright\)~\textbf{Step~\arabic{step}.}]
	\else
		\item[~\(\triangleright\)~\textbf{Step~\arabic{step}}] (\texttt{#1})\textbf{.}%
	\fi
}
\begin{document}

\title{Doubly relaxed forward-Douglas--Rachford splitting for the sum of two nonconvex and a DC function}

\author{
Minh N. Dao\thanks{School of Science, RMIT University, Melbourne, VIC 3000, Australia.
E-mail: \texttt{minh.dao@rmit.edu.au}.}, 
Tan Nhat Pham\thanks{Centre for New Energy Transition Research, Federation University Australia, Ballarat, VIC 3353, Australia.
E-mail: \texttt{pntan.iac@gmail.com}.}, 
and Phan Thanh Tung\thanks{Faculty of Mathematical Economics, Thuongmai University, Hanoi, Vietnam.
E-mail: \texttt{phanthanhtung@tmu.edu.vn}.}
}

\date{}

\maketitle

\begin{abstract}
In this paper, we consider a class of structured nonconvex nonsmooth optimization problems whose objective function is the sum of three nonconvex functions, one of which is expressed in a difference-of-convex (DC) form. This problem class covers several important structures in the literature including the sum of three functions and the general DC program. We propose a splitting algorithm and prove the subsequential convergence to a stationary point of the problem. The full sequential convergence, along with convergence rates for both the iterates and objective function values, is then established without requiring differentiability of the concave part. Our analysis not only extends but also unifies and improves recent convergence analyses in nonconvex settings. We benchmark our proposed algorithm with notable algorithms in the literature to show its competitiveness on a low rank matrix completion problem and a simutaneously sparse and low-rank matrix estimation problem. Our algorithm exhibits very competitive results compared to notable algorithms in the literature, on both synthetic data and public dataset.
\end{abstract}

{\small
\noindent{\bfseries Keywords:}
composite optimization,
DC program,
Douglas--Rachford splitting,
global convergence, 
nonconvex optimization,
three-operator splitting.

\noindent{\bf Mathematics Subject Classification (MSC 2020):}
90C26,	
49M27,	
65K05.	
}

\section{Introduction}

We consider the composite optimization problem
\begin{align}\label{eq:prob}
\min_{x\in \mathcal{H}} F(x) :=f(x) +g(x) +h(x),
\end{align}
where $\mathcal{H}$ is a real Hilbert space, $f, g\colon \mathcal{H}\to \left(-\infty, +\infty\right]$ are proper lower semicontinuous functions, and $h =\bar{h} -\ubar{h}$ in which $\bar{h}\colon \mathcal{H}\to \mathbb{R}$ is a differentiable (possibly nonconvex) function and $\ubar{h}\colon \mathcal{H}\to \mathbb{R}$ is a continuous (possibly nonsmooth) convex function. This problem can be interpreted as minimizing a loss function $f$ over a constraint set (handled by $g$), with the incorporation of a regularization term $h$ that has a difference-of-convex (DC) form. The DC regularization has shown promising performance in the current literature, as shown in \cite{TLiu2019,Lou2017,Wen2017}.

If $\ubar{h} \equiv0$, then $h$ reduces to a differential function and problem \eqref{eq:prob} becomes a further extension of the commonly known sum of two functions in the literature. Many practical problems arising in machine learning or image processing can be formulated into minimizing a sum of a smooth function and a nonsmooth function, as shown in \cite{Cai2010,Chambolle2016}. The nonsmooth parts are usually called a regularization term, and they can be absolute-value norm ($\ell_1$-norm) or group lasso, which often has closed-form proximal operator or at least is fast to be computed. The success of those regularization terms led to the introduction of more complex, computational demanding regularization terms, such as the overlapping group lasso \cite{Jacob2009}, structured sparsity \cite{pmlr-v38-elhalabi15}, or total-variation \cite{barbero2018}. Fortunately, such complex terms can be decomposed into sum of terms which may have closed-form proximal operators. This leads to the problem of minimizing the sum of three functions. Several splitting algorithms have been developed in the literature to tackle this type of problem, both in convex and nonconvex settings. For the convex setting, a three-operator splitting scheme, which was later known as the \emph{Davis--Yin splitting}, was proposed in \cite{Davis2017} to solve the inclusion problem with three maximally monotone operators. This algorithm can be viewed as an extension of the well-known \emph{Douglas--Rachford} \cite{DR56} and \emph{forward-backward} schemes \cite{Passty_1979}. It can also be viewed as a generalization of the \emph{forward-Douglas--Rachford splitting}, which was studied in \cite{BriceoArias2013}. 
An adaptive three-operator splitting algorithm was developed in \cite{DP21} for the sum of two generalized monotone operators and one cocoercive operator. This also addresses the problem of minimizing the sum of three functions, two of which are weakly and strongly convex with the weak convexity neutralized. Existing studies for nonconvex settings are in fact limited in number. The general assumption is that at least two terms in \eqref{eq:prob} possess Lipschitz continuous gradients. Under such an assumption, the authors in \cite{Bian2021} established the subsequential and full sequential convergence of the Davis--Yin splitting with the help of the Kurdyka--{\L}ojasiewicz property.

When $f\equiv 0$ and other functions are assumed to be convex, problem \eqref{eq:prob} reduces to the general DC program
\begin{align}\label{eq:DC}
\min_{x\in \mathcal{H}} g(x) +\bar{h}(x) -\ubar{h}(x),
\end{align}
which is also a broad class of optimization problems in the literature \cite{Le_Thi2018-ht}. Notable algorithms to solve the DC program is the \emph{difference-of-convex algorithm} (DCA) and its variants as reported in \cite{Le_Thi2018-ht}. Recently, a splitting algorithm based on the Davis--Yin splitting has been proposed in \cite{Chuang2021} to solve the DC program with the assumption that the concave part has Lipschitz continuous gradient. This work suggests that the Davis--Yin splitting also has potential to solve the DC program; however, its convergence analysis relies on the strong convexity of $g$ and $\bar{h}$.

In this work, we propose a splitting algorithm tailored to the fully nonconvex problem \eqref{eq:prob}, which includes both minimizing the sum of three nonconvex functions and solving a generalized class of DC programs. Extending beyond the adaptive three-operator splitting in \cite{DP21}, our proposed algorithm not only features two flexible relaxation parameters but also possesses the ability to handle DC functions. We refer to it as the \emph{doubly relaxed forward-Douglas--Rachford splitting} (DRFDR). This algorithm exhibits the subsequential convergence to a stationary point of \eqref{eq:prob} under mild assumptions. With the Kurdyka--{\L}ojasiewicz property of a suitable function but without the differentiability of the concave part, we establish global convergence for the full sequence of iterates generated by the DRFDR, along with convergence rates for both the iterates and objective function values. We also thoroughly analyze the ranges of the important parameters used in the DRFDR and show that they are less restrictive than those used in previous works. Our convergence analysis unifies and enhances the existing convergence results for the Douglas--Rachford, Peaceman--Rachford, and Davis--Yin splitting algorithms in nonconvex settings \cite{LP16,LLP17,Bian2021}.

In the following examples, we present two problems that fit the structure of \eqref{eq:prob} along with their applications. We will also conduct numerical experiments on these problems to evaluate the performance of our proposed algorithm in a subsequent section of this paper.

\begin{example}[Low-rank matrix recovery]
\label{ex:MAT_REC}
Recovering an unknown low-rank or approximately low-rank matrix from very limited information is a critical problem arising in many fields, such as machine learning, control, or image processing \cite{Cai2010,JainSVP}. One simple example is recovering a data matrix from a sampling of its entries. This problem also has applications in power system operation \cite{Cai2018_IEEE,CSSE}, where it is used to recover the missing data from the phasor measurement units (PMUs), or load data from the supervisory control and data acquisition (SCADA) systems. The basic model of this problem is expressed as
\begin{align}\label{ex:mat_recover}
\min_{X\in \mathbb{R}^{m\times d}} \rank(X) \text{~~s.t.~~} \mathcal{P}_{\Omega} (X)= \mathcal{P}_{\Omega}(M), \tag{MR}
\end{align}
where $\Omega$ denotes the index set of matrix entries which are sampled uniformly, $\mathcal{P}_{\Omega}(\cdot)$ is the orthogonal projection onto the span of matrices vanishing outside the set $\Omega$, which means that the $(i,j)^{th}$ entry of $\mathcal{P}_{\Omega}(X)$ is equal to $X^{ij}$ if $(i,j) \in \Omega$ and $0$ otherwise. Since problem \eqref{ex:mat_recover} is NP-hard \cite{Meka2008}, a more robust formulation was proposed in \cite{JainSVP} as
\begin{align}\label{ex:mat_recover_re}
\min_{X\in \mathbb{R}^{m\times d}}~~\frac{1}{2}\|\mathcal{P}_{\Omega} (X)- \mathcal{P}_{\Omega}(M)\|^2 +\iota_{\mathcal{C}(r)}(X),  \tag{MR'}
\end{align}
where $\iota_{\mathcal{C}(r)}(\cdot)$ is the indicator function of $\mathcal{C}(r) :=\{X: \rank(X)\leq r\}$. To ensure the coerciveness of the objective function and the stability of the solutions, the authors in \cite{Bian2021} incorporated an additional Tikhonov regularization terms $\frac{\rho}{2}\|X\|_F^2$ ($\|\cdot\|_F$ is the standard Frobenius norm and $\rho \in (0, +\infty)$ is the regularization parameter) into \eqref{ex:mat_recover_re}. This led to the modified problem
\begin{align}\label{ex:mat_recover_mod}
\min_{X\in \mathbb{R}^{m\times d}}~~\frac{1}{2}\|\mathcal{P}_{\Omega} (X)- \mathcal{P}_{\Omega}(M)\|^2 +\iota_{\mathcal{C}(r)}(X) +\frac{\rho}{2}\|X\|_F^2.  \tag{MR-M}
\end{align}
Clearly, problem \eqref{ex:mat_recover_mod} fits the form of \eqref{eq:prob} with $f(X)=\frac{1}{2}\|\mathcal{P}_{\Omega} (X)- \mathcal{P}_{\Omega}(M)\|^2$, $g(X)=\iota_{\mathcal{C}(r)}(X)$, $\bar{h}(X)=\frac{\rho}{2}\|X\|_F^2$, and $\ubar{h} \equiv 0 $.
\end{example}

\begin{example}[Simultaneously sparse and low-rank matrix estimation]
\label{ex:low_rank_and_sparse} 
Matrix estimation has many modern applications such as covariance estimation, graphical model structure learning, graph denoising, or link prediction. The objective of this problem is to estimate an unknown matrix, which has a block-diagonal structure, from its noisy observation. As discussed in \cite{estimation_lrsm}, solutions of low-rank estimation problems are in general not sparse at all. Hence, it is important to introduce a combination of regularization terms to ensure that the solution is low-rank and sparse at the same time. A convex optimization model to solve this problem was introduced in \cite{estimation_lrsm} as
\begin{align}\label{prob:SLRME_1}
\min_{X\in \mathbb{R}^{m\times m}}~~\frac{1}{2}\|X-A\|_F^2 + \rho_1 \|X\|_1+ \rho_2 \|X\|_{*}, \tag{SLRME}
\end{align}
where $A \in \mathbb{R}^{m\times m}$ is the initial noisy input matrix, $\|X\|_{*}$ denotes the nuclear norm (which controls the rank of the solution), $\|X\|_1$ is the sum of the absolute values of all entries in $X$ (which controls the sparsity of the solution), and $\rho_1, \rho_2 \in (0, +\infty)$ are regularization parameters. Recently, a new representation for the nuclear norm has been proposed using the difference of squared Frobenius norm and squared Ky Fan $2$-$k$ norm \cite[Section~6.4.4]{Shi2022}. Based on this DC representation, we convert problem \eqref{prob:SLRME_1} into
\begin{align}\label{prob:SLRME_2}
\min_{X\in \mathbb{R}^{m\times m}}~~\frac{1}{2}\|X-A\|_F^2 + \rho_1 \|X\|_1+ \rho_2 (\|X\|_F^2 - \tnorm{X}^2_{k,2}) , \tag{SLRME-M}
\end{align}
in which $\tnorm{\cdot}_{k,2}$ denotes the Ky Fan $2$-$k$ norm. Now, problem \eqref{prob:SLRME_2} fits the form of \eqref{eq:prob} with $f(X)=\frac{1}{2}\|X-A\|_F^2$, $g(X)=\rho_1 \|X\|_1$, $\bar{h}(X)=\rho_2 \|X\|_F^2$, and $\ubar{h}(X)=\rho_2\tnorm{X}^2_{k,2}$. Therefore, it is worth investigating whether the nonconvex formulation \eqref{prob:SLRME_2} has any advantages over the convex counterpart \eqref{prob:SLRME_1}.
\end{example}

The rest of the paper is organized as follows. Section~\ref{sec:preliminaries} presents the preliminary materials used in this work. In Section~\ref{sec:algorithm}, we introduce our algorithm and then establish both subsequential and full sequential convergence under suitable assumptions. Section~\ref{sec:numerical_result} provides numerical results of the proposed algorithm. Finally, the conclusions are given in Section~\ref{sec:conclusion}.

\section{Preliminaries}
\label{sec:preliminaries}

We assume throughout that $\mathcal{H}$ is a real Hilbert space with inner product $\langle \cdot, \cdot\rangle$ and induced norm $\|\cdot\|$. The set of nonnegative integers is denoted by $\mathbb{N}$, the set of positive integers by $\mathbb{N}^\ast$, and the set of real numbers by $\mathbb{R}$. Given a set-valued operator $A\colon \mathcal{H}\rightrightarrows \mathcal{H}$, its \emph{domain} is $\dom A :=\{x\in \mathcal{H}: Ax\neq \varnothing\}$ and its \emph{graph} is $\gra A :=\{(x, u)\in \mathcal{H}\times \mathcal{H}: u\in Ax\}$.

Let $f\colon \mathcal{H}\to \left(-\infty, +\infty\right]$. The \emph{domain} of $f$ is $\dom f :=\{x\in \mathcal{H}: f(x) <+\infty\}$. We recall that $f$ is \emph{proper} if $\dom f\neq \varnothing$, \emph{lower semicontinuous} if, for all $x\in \mathcal{H}$, $f(x)\leq \liminf_{z\to x} f(z)$, and \emph{coercive} if $f(x)\to +\infty$ as $\|x\|\to +\infty$.
The function $f$ is \emph{$\alpha$-convex} for some $\alpha\in \mathbb{R}$ if, for all $x, y\in \dom f$ and all $\lambda\in (0, 1)$,
\begin{align*}
f((1 -\lambda)x +\lambda y) +\frac{\alpha}{2}\lambda(1 -\lambda)\|x -y\|^2\leq (1 -\lambda)f(x) +\lambda f(y).    
\end{align*}
We say that $f$ is \emph{convex} if $\alpha =0$, \emph{strongly convex} if $\alpha >0$, and \emph{weakly convex} if $\alpha <0$. It is known that $f$ is $\alpha$-convex if and only if $f -\frac{\alpha}{2}\|\cdot\|^2$ is convex.

Let $f\colon \mathcal{H}\to \left(-\infty, +\infty\right]$ be proper and lower semicontinuous. The \emph{regular subdifferential} of $f$ at $x\in \dom f$ is defined by
\begin{align*}
\widehat{\partial} f(x) :=\left\{x^*\in \mathcal{H}:\; \liminf_{y\to x}\frac{f(y) -f(x) -\langle x^*, y -x\rangle}{\|y -x\|}\geq 0\right\}    
\end{align*}
and the \emph{limiting subdifferential} of $f$ at $x\in \dom f$ is defined by
\begin{align*}
\partial f(x) :=\left\{x^*\in \mathcal{H}:\; \exists x_n\to x \text{~and~} x^*_n\in \widehat{\partial} f(x_n) \text{~with~} f(x_n)\to f(x) \text{~and~} x^*_n\to x^*\right\}.    
\end{align*}
Both regular and limiting subdifferentials of $f$ at $x\notin \dom f$ are defined to be the empty set. If $f$ is convex, then both regular and limiting subdifferentials coincide with the \emph{classical subdifferential} of convex analysis \cite[Theorem~1.93]{Mor06}, that is,
\begin{align}\label{eq:cvxsubdiff}
\widehat{\partial} f(x) =\partial f(x) =\{x^*\in \mathcal{H}:\; \forall y\in \mathcal{H},\; f(x) +\langle x^*, y -x\rangle \leq f(y)\}.
\end{align}
The function $f$ is said to satisfy the \emph{Kurdyka--{\L}ojasiewicz (KL) property} at $\overline{x} \in \dom \partial f$ if there exist $\varepsilon\in (0, +\infty)$, $\delta \in (0, +\infty]$, and a continuous concave function $\varphi: \left[0, \delta\right) \to [0, +\infty)$ such that $\varphi$ is continuously differentiable with $\varphi' > 0$ on $(0, \delta)$, $\varphi(0) = 0$, and
\begin{align*}
\varphi'(f(x) -f(\overline{x})) \dist(0, \partial f(x)) \geq 1,    
\end{align*}
whenever $\|x -\overline{x}\|\leq \varepsilon$ and $f(\overline{x}) < f(x) < f(\overline{x}) + \delta$. If $f$ satisfies the KL property at any $\overline{x} \in \dom \partial f$, then it is called a \emph{KL function}. We say that $f$ satisfies the \emph{KL property at $\overline{x}$ with exponent $\lambda \in [0, 1)$} if it satisfies the KL property at $\overline{x} \in \dom \partial f$ in which the corresponding function $\varphi$ can be chosen as $\varphi(t) = c t^{1 -\lambda}$ for some $c \in (0, +\infty)$. If $f$ is a KL function and has the same exponent $\lambda \in [0, 1)$ at any point in $\dom \partial f$, then it is called a \emph{KL function with exponent $\lambda$}.

We now collect some useful properties related to $\alpha$-convex functions.

\begin{proposition}
\label{p:subdiff}
Let $f\colon \mathcal{H}\to \left(-\infty, +\infty\right]$ be proper, lower semicontinuous, and $\alpha$-convex. Then the following hold:
\begin{enumerate}
\item\label{p:subdiff_mono}
$\partial f =\widehat{\partial}f$ is maximally $\alpha$-monotone in the sense that $(x, x^*)\in \gra \partial f$ if and only if, for all $(y, y^*)\in \gra \partial f$, $\langle y^* -x^*, y -x\rangle \geq \alpha\|y -x\|^2$. 
\item\label{p:subdiff_ineq}
For all $x, y\in \mathcal{H}$ and all $x^*\in \partial f(x)$, $f(x) +\langle x^*, y -x\rangle \leq f(y) -\frac{\alpha}{2}\|y -x\|^2$. Consequently, if $x$ is a local minimizer of $f$, then, for all $y\in \mathcal{H}$, $f(x) \leq f(y) -\frac{\alpha}{2}\|y -x\|^2$.
\end{enumerate}
\end{proposition}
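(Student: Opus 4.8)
The plan is to reduce both parts to the well-understood convex situation via the equivalence, recalled above, that $f$ is $\alpha$-convex if and only if $g := f -\frac{\alpha}{2}\|\cdot\|^2$ is convex. Since $f$ is proper and lower semicontinuous and $\frac{\alpha}{2}\|\cdot\|^2$ is finite and continuous, $g$ is proper and lower semicontinuous as well, so the convex subdifferential formula \eqref{eq:cvxsubdiff} is available for $g$. Because $\frac{\alpha}{2}\|\cdot\|^2$ is continuously differentiable with gradient $\alpha\Id$, the standard sum rule under a smooth perturbation --- which for the regular subdifferential follows directly from its definition, and for the limiting subdifferential from \cite{Mor06} --- gives $\widehat{\partial}f(x) =\widehat{\partial}g(x) +\alpha x$ and $\partial f(x) =\partial g(x) +\alpha x$ for every $x\in\mathcal{H}$. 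Combined with $\widehat{\partial}g =\partial g$ from \eqref{eq:cvxsubdiff}, this already yields the identity $\widehat{\partial}f =\partial f$ in \ref{p:subdiff_mono} and shows that $(x, x^*)\in\gra\partial f$ if and only if $(x, x^* -\alpha x)\in\gra\partial g$.

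For the monotonicity statement in \ref{p:subdiff_mono}, I would invoke the maximal monotonicity of the convex subdifferential $\partial g$: one has $(x, u)\in\gra\partial g$ precisely when $\langle v -u, y -x\rangle\geq 0$ for all $(y, v)\in\gra\partial g$. Writing $u = x^* -\alpha x$ and $v = y^* -\alpha y$ and using the graph translation obtained in the previous step, this condition reads exactly $\langle y^* -x^*, y -x\rangle\geq \alpha\|y -x\|^2$ for all $(y, y^*)\in\gra\partial f$, which is the asserted maximal $\alpha$-monotonicity.

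For \ref{p:subdiff_ineq}, I would fix $x, y\in\mathcal{H}$ and $x^*\in\partial f(x)$, so that $x^* -\alpha x\in\partial g(x)$. Applying the convex subgradient inequality \eqref{eq:cvxsubdiff} to $g$ gives $g(x) +\langle x^* -\alpha x, y -x\rangle\leq g(y)$; substituting $g =f -\frac{\alpha}{2}\|\cdot\|^2$ and rearranging with the elementary identity $\frac{\alpha}{2}\|x\|^2 -\frac{\alpha}{2}\|y\|^2 +\alpha\langle x, y -x\rangle =-\frac{\alpha}{2}\|y -x\|^2$ produces $f(x) +\langle x^*, y -x\rangle\leq f(y) -\frac{\alpha}{2}\|y -x\|^2$. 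For the final assertion, if $x$ is a local minimizer of $f$, then Fermat's rule yields $0\in\widehat{\partial}f(x) =\partial f(x)$, and taking $x^* =0$ in the inequality just established gives $f(x)\leq f(y) -\frac{\alpha}{2}\|y -x\|^2$ for all $y\in\mathcal{H}$.

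No step here is genuinely hard; the only two points that need to be cited rather than derived in one line are the sum rule for the limiting subdifferential under the smooth perturbation $\frac{\alpha}{2}\|\cdot\|^2$ and the maximal monotonicity of the convex subdifferential, both of which are classical. The mild care required is to check that the graph translation $(x,x^*)\mapsto(x,x^*-\alpha x)$ transports exactly the relevant objects (graphs, monotonicity inequalities, and subgradient inequalities) between $f$ and $g$, which is where I expect the bookkeeping to concentrate.
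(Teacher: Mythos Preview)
Your proposal is correct and matches the paper's approach: both reduce to the convex function $g=f-\frac{\alpha}{2}\|\cdot\|^2$, use the smooth sum rule to obtain $\partial f(x)=\partial g(x)+\alpha x=\widehat{\partial}f(x)$, and derive \ref{p:subdiff_ineq} from the convex subgradient inequality for $g$ via the same algebraic rearrangement. The only cosmetic difference is that for the maximal $\alpha$-monotonicity in \ref{p:subdiff_mono} the paper simply cites \cite[Lemma~5.2(i)]{DP19}, whereas you spell out the graph translation from the maximal monotonicity of $\partial g$; your argument is exactly what underlies that lemma.
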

\begin{proof}
By assumption, $h :=f -\frac{\alpha}{2}\|\cdot\|^2$ is a convex function. Using the subdifferential sum rule (\cite[Proposition~1.107]{Mor06}) and the convexity of $h$, we have that, for all $x\in \mathcal{H}$,
\begin{align}\label{eq:partial f}
\partial f(x) =\partial h(x) +\alpha x =\widehat{\partial}h(x) +\alpha x =\widehat{\partial}f(x).
\end{align}
So, $\partial f =\widehat{\partial}f$. The remaining conclusion of \ref{p:subdiff_mono} then follows from \cite[Lemma~5.2(i)]{DP19}.

Let $x, y\in \mathcal{H}$ and $x^*\in \partial f(x)$. In view of \eqref{eq:partial f}, $x^* =\tilde{x} +\alpha x$ for some $\tilde{x}\in \partial h(x)$. Since $h(x) +\langle \tilde{x}, y -x\rangle\leq h(y)$, we derive that 
\begin{align*}
f(x) +\langle x^*, y -x\rangle &=h(x) +\frac{\alpha}{2}\|x\|^2 +\langle \tilde{x} +\alpha x, y -x\rangle \\
&\leq h(y) +\frac{\alpha}{2}\|x\|^2 +\alpha\langle x, y -x\rangle \\
&=f(y) -\frac{\alpha}{2}\|y-x\|^2.
\end{align*} 
Finally, if $x$ is a local minimizer of $f$, then $0\in \partial f(x)$, and applying the above inequality with $x^* =0$ completes \ref{p:subdiff_ineq}. 
\end{proof}

\begin{proposition}
\label{p:diff}
Let $f\colon \mathcal{H}\to \left(-\infty, +\infty\right]$ be proper and let $\alpha\in \mathbb{R}$. Suppose that $\dom f$ is open and convex, and that $f$ is differentiable on $\dom f$. Then the following hold:
\begin{enumerate}
\item\label{p:diff_alpha-cvx} 
$f$ is $\alpha$-convex if and only if, for all $x, y\in \dom f$,
\begin{align*}
f(x) +\langle \nabla f(x), y -x\rangle \leq f(y) -\frac{\alpha}{2}\|y -x\|^2.
\end{align*} 
\item\label{p:diff_descent} 
If $\nabla f$ is $\kappa$-Lipschitz continuous on $\dom f$, then, for all $x, y\in \dom f$,
\begin{align*}
f(y) -\frac{\kappa}{2}\|y -x\|^2\leq f(x) +\langle \nabla f(x), y -x\rangle \leq f(y) +\frac{\kappa}{2}\|y -x\|^2
\end{align*}
and $f$ is $(-\kappa)$-convex.
\end{enumerate} 
\end{proposition}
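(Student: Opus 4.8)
\emph{Proof strategy.} The plan is to reduce both parts to the classical first-order characterization of convexity and the descent lemma, exploiting that $\dom f$ is open and convex, so that for all $x,y\in\dom f$ and $t\in[0,1]$ the point $x+t(y-x)$ lies in $\dom f$ and $f$ is differentiable there.

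For part~\ref{p:diff_alpha-cvx} I would prove the two implications separately. Assuming $f$ is $\alpha$-convex, fix $x,y\in\dom f$ and $\lambda\in(0,1)$, apply the defining inequality to the points $x,y$ and the parameter $\lambda$, subtract $f(x)$, divide by $\lambda$, and rearrange to obtain
\[
\frac{f(x+\lambda(y-x))-f(x)}{\lambda}\le f(y)-f(x)-\frac{\alpha}{2}(1-\lambda)\|y-x\|^2 .
\]
Letting $\lambda\downarrow 0$, the left-hand side tends to $\langle\nabla f(x),y-x\rangle$ by differentiability of $f$ at $x$, and the right-hand side tends to $f(y)-f(x)-\frac{\alpha}{2}\|y-x\|^2$, which is the asserted inequality. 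Conversely, assuming that inequality for all $x,y\in\dom f$, fix $x,y\in\dom f$, $\lambda\in(0,1)$, and set $z:=(1-\lambda)x+\lambda y\in\dom f$; applying the inequality once to the pair $(z,x)$ and once to $(z,y)$ and forming the convex combination with weights $1-\lambda$ and $\lambda$, the gradient contributions cancel since $(1-\lambda)(x-z)+\lambda(y-z)=0$, while the quadratic terms collect to $\frac{\alpha}{2}\big[(1-\lambda)\lambda^2+\lambda(1-\lambda)^2\big]\|x-y\|^2=\frac{\alpha}{2}\lambda(1-\lambda)\|x-y\|^2$, recovering exactly the $\alpha$-convexity inequality.

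For part~\ref{p:diff_descent} I would use the one-dimensional reduction $\phi(t):=f(x+t(y-x))$ on $[0,1]$, which is continuously differentiable with $\phi'(t)=\langle\nabla f(x+t(y-x)),y-x\rangle$ (the chain rule applies since the whole segment lies in $\dom f$ and $\nabla f$ is continuous there, being Lipschitz). The fundamental theorem of calculus then gives
\[
f(y)-f(x)-\langle\nabla f(x),y-x\rangle=\int_0^1\langle\nabla f(x+t(y-x))-\nabla f(x),\,y-x\rangle\,dt ,
\]
and estimating the integrand by the Cauchy--Schwarz inequality together with $\kappa$-Lipschitz continuity of $\nabla f$ yields the bound $\kappa t\|y-x\|^2$, hence, after integration, $|f(y)-f(x)-\langle\nabla f(x),y-x\rangle|\le\frac{\kappa}{2}\|y-x\|^2$; the two halves of this two-sided estimate are precisely the two displayed inequalities. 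Finally, the right-hand inequality rewrites as $f(x)+\langle\nabla f(x),y-x\rangle\le f(y)+\frac{\kappa}{2}\|y-x\|^2$, which is the inequality of part~\ref{p:diff_alpha-cvx} with $\alpha=-\kappa$ (the term $-\frac{\alpha}{2}\|y-x\|^2$ becoming $+\frac{\kappa}{2}\|y-x\|^2$), so that part immediately yields that $f$ is $(-\kappa)$-convex.

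I do not anticipate a genuine obstacle; the points needing care are purely technical: checking that the segment $[x,y]$ (and with it differentiability of $f$ along it) stays inside $\dom f$ — which is exactly where openness and convexity of $\dom f$ enter — justifying the chain rule and the fundamental theorem of calculus for the scalar function $\phi$ in the Hilbert-space setting, and the passage to the limit in the difference quotient in part~\ref{p:diff_alpha-cvx}. Alternatively, both parts can be deduced by applying the corresponding statements for convex functions to $f-\frac{\alpha}{2}\|\cdot\|^2$, citing a standard reference such as \cite{Mor06}, but the direct arguments above are short and self-contained.
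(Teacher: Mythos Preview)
Your proposal is correct. The paper takes a different, shorter route: for part~\ref{p:diff_alpha-cvx} it sets $h:=f-\frac{\alpha}{2}\|\cdot\|^2$, verifies the algebraic identity
\[
f(y)-f(x)-\langle\nabla f(x),y-x\rangle \;=\; h(y)-h(x)-\langle\nabla h(x),y-x\rangle+\tfrac{\alpha}{2}\|y-x\|^2,
\]
and then invokes the standard first-order characterization of convexity for $h$ from \cite[Proposition~17.7]{BC17}; for part~\ref{p:diff_descent} it simply cites the descent lemma \cite[Lemma~2.64(i)]{BC17} and applies part~\ref{p:diff_alpha-cvx}. In other words, the paper uses exactly the ``alternative'' you mention in your final sentence. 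Your direct arguments (limit of difference quotients for the forward implication, convex combination at the intermediate point for the converse, and the integral representation for the descent lemma) are self-contained and avoid external citations, at the cost of being longer; the paper's reduction is terse but relies on the reader accepting the cited results. Both are valid and standard.
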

\begin{proof}
\ref{p:diff_alpha-cvx}: Set $h :=f -\frac{\alpha}{2}\|\cdot\|^2$ and let $x, y\in \dom f$. Then $\dom h =\dom f$ and $\nabla f(x) =\nabla h(x) +\alpha x$. It follows that 
\begin{align*}
f(y) -f(x) -\langle \nabla f(x), y -x\rangle &=h(y) +\frac{\alpha}{2}\|y\|^2 -h(x) -\frac{\alpha}{2}\|x\|^2 -\langle \nabla h(x) +\alpha x, y -x\rangle \\
&=h(y) -h(x) -\langle \nabla h(x), y -x\rangle +\frac{\alpha}{2}\|y -x\|^2
\end{align*} 
The conclusion is now obtained by applying \cite[Proposition~17.7]{BC17} to $h$.

\ref{p:diff_descent}: This follows from \cite[Lemma~2.64(i)]{BC17} and \ref{p:diff_alpha-cvx}. Note that if $\nabla f$ is $0-$Lipschitz continuous on $\dom f$, then $f$ reduces to a linear function.
\end{proof}

We end this section the following technical lemma.
\begin{lemma}
\label{l:2products}
Let $a, b, u, v\in \mathcal{H}$. Then
\begin{align*}
\langle u -a, v -a\rangle =\langle u -b, v -b\rangle +\frac{1}{2}(\|u -a\|^2 -\|u -b\|^2) +\frac{1}{2}(\|v -a\|^2 -\|v -b\|^2).     
\end{align*}
\end{lemma}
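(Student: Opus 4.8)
The plan is to reduce everything to the standard polarization-type identity
\begin{align*}
\langle x, y\rangle = \tfrac{1}{2}\left(\|x\|^2 + \|y\|^2 - \|x -y\|^2\right),
\end{align*}
which holds for all $x, y \in \mathcal{H}$ and is an immediate consequence of expanding $\|x -y\|^2 = \|x\|^2 - 2\langle x, y\rangle + \|y\|^2$. The key observation is that the two difference vectors appearing on the two sides of the claimed identity coincide, namely $(u -a) -(v -a) = u -v = (u -b) -(v -b)$, so that a common term will cancel upon subtraction.

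Concretely, I would apply the polarization identity first with $x = u -a$ and $y = v -a$ to obtain
\begin{align*}
\langle u -a, v -a\rangle = \tfrac{1}{2}\left(\|u -a\|^2 + \|v -a\|^2 - \|u -v\|^2\right),
\end{align*}
and then again with $x = u -b$ and $y = v -b$ to obtain
\begin{align*}
\langle u -b, v -b\rangle = \tfrac{1}{2}\left(\|u -b\|^2 + \|v -b\|^2 - \|u -v\|^2\right).
\end{align*}
Subtracting the second equation from the first cancels the two $-\tfrac{1}{2}\|u -v\|^2$ terms, and a trivial rearrangement then yields exactly the stated identity.

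Alternatively, one can simply expand both inner products on the right-hand side via $\|w -c\|^2 = \|w\|^2 - 2\langle w, c\rangle + \|c\|^2$ for the relevant choices of $w \in \{u, v\}$ and $c \in \{a, b\}$, collect terms, and verify that the coefficients of $\|a\|^2$, $\|b\|^2$, $\langle u, a\rangle$, $\langle u, b\rangle$, $\langle v, a\rangle$, $\langle v, b\rangle$, and $\langle u, v\rangle$ agree with those obtained by expanding $\langle u -a, v -a\rangle$. There is no genuine obstacle here, since the statement is a bookkeeping identity; the only real choice in the proof is whether to route through polarization (shorter and more transparent) or to expand brute-force (more mechanical), and I would take the former.
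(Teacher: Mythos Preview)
Your proposal is correct and is essentially identical to the paper's own proof: the paper also applies the polarization identity $\langle u-x, v-x\rangle = \tfrac{1}{2}(\|u-x\|^2 + \|v-x\|^2 - \|u-v\|^2)$ for $x\in\{a,b\}$ and then subtracts. There is nothing to add.
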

\begin{proof}
We first have that, for $x\in \{a, b\}$, $\langle u -x, v -x\rangle =\frac{1}{2}(\|u -x\|^2 +\|v -x\|^2 -\|u -v\|^2)$, and the conclusion then follows.
\end{proof}

\section{Doubly relaxed forward-Douglas--Rachford splitting}
\label{sec:algorithm}

In this section, we propose a splitting algorithm with guaranteed global convergence to solve problem \eqref{eq:prob}. Recall that the \emph{proximity operator} of a proper function $f\colon \mathcal{H}\to \left(-\infty,+\infty\right]$ with parameter $\gamma\in (0, +\infty)$ at $x\in \mathcal{H}$ is defined by
\begin{align*}
\prox_{\gamma f}(x) :=\argmin_{z\in \mathcal{H}} \left(f(z) +\frac{1}{2\gamma}\|z -x\|^2\right).    
\end{align*}

\begin{tcolorbox}[
	left=0pt,right=0pt,top=0pt,bottom=0pt,
	colback=blue!10!white, colframe=blue!50!white,
  	boxrule=0.2pt,
  	breakable]
\begin{algorithm}[DRFDR]
\label{algo:DRFDR}
\step{}
Let $y_0 \in \mathcal{H}$, $z_0 \in \mathcal{H}$, and set $n =0$. Let $\gamma \in (0, +\infty)$, $\theta \in (0, 1]$, and $\eta \in (0, +\infty)$.

\step{}\label{step:main}
Compute $y^*_n \in \partial \ubar{h}(y_n)$ and find
\begin{align*}
\left\{
\begin{aligned}
&x_{n+1} \in \prox_{\gamma f}(z_n), \\
&y_{n+1} \in \prox_{\theta\gamma g}((\theta +1)x_{n+1} -\theta z_n -\theta\gamma \nabla \bar{h}(x_{n+1}) +\theta\gamma y^*_n), \\
&z_{n+1} =z_n +\eta(y_{n+1} -x_{n+1}).
\end{aligned}
\right.
\end{align*}

\step{}
If a termination criterion is not met, set $n =n+1$ and go to Step~\ref{step:main}.
\end{algorithm}
\end{tcolorbox}

\begin{remark}[Discussion of the algorithm structure]
\label{r:structure}
Let us take a closer look at Algorithm~\ref{algo:DRFDR} with some comments.
\begin{enumerate}
\item\label{r:structure_update}
The updating scheme of Algorithm~\ref{algo:DRFDR} can be written as computing $y^*_n \in \partial \ubar{h}(y_n)$ and finding
\begin{align*}
\left\{
\begin{aligned}
&x_{n+1} \in \argmin_{x\in \mathcal{H}} \left(f(x) +\frac{1}{2\gamma}\|x -z_n\|^2\right), \\
&y_{n+1} \in \argmin_{y\in \mathcal{H}} \left(g(y) +\frac{1}{2\theta\gamma}\|y -(\theta +1)x_{n+1} +\theta z_n +\theta\gamma \nabla \bar{h}(x_{n+1}) -\theta\gamma y^*_n\|^2\right), \\
&z_{n+1} =z_n +\eta(y_{n+1} -x_{n+1}).
\end{aligned}
\right.
\end{align*}
Since $y -(\theta +1)x_{n+1} +\theta z_n +\theta\gamma \nabla \bar{h}(x_{n+1}) -\theta\gamma y^*_n =(y -x_{n+1}) -\theta(x_{n+1} -z_n -\gamma \nabla \bar{h}(x_{n+1}) +\gamma y^*_n)$, we further derive that 
\begin{align*}
y_{n+1} &\in \argmin_{y\in \mathcal{H}} \bigg(g(y) -\frac{1}{\gamma}\langle x_{n+1} -z_n -\gamma \nabla \bar{h}(x_{n+1}) +\gamma y^*_n, y -x_{n+1}\rangle +\frac{1}{2\theta\gamma}\|y -x_{n+1}\|^2\bigg) \\
&= \argmin_{y\in \mathcal{H}} \bigg(g(y) +\bar{h}(x_{n+1}) +\langle \nabla \bar{h}(x_{n+1}), y -x_{n+1}\rangle -\ubar{h}(y_n) -\langle y^*_n, y -y_n \rangle \\
&\hspace{2cm} +\frac{1}{\gamma}\langle z_n -x_{n+1}, y -x_{n+1}\rangle +\frac{1}{2\theta\gamma}\|y -x_{n+1}\|^2\bigg). 
\end{align*}

\item 
When $\ubar{h} \equiv 0$, Step~\ref{step:main} of Algorithm~\ref{algo:DRFDR} becomes
\begin{align*}
\left\{
\begin{aligned}
&x_{n+1} \in \prox_{\gamma f}(z_n), \\
&y_{n+1} \in \prox_{\theta\gamma g}((\theta +1)x_{n+1} -\theta z_n -\theta\gamma \nabla \bar{h}(x_{n+1})), \\
&z_{n+1} =z_n +\eta(y_{n+1} -x_{n+1}).
\end{aligned}
\right.
\end{align*}
This is the adaptive splitting algorithm studied in \cite{DP21}, which reduces to the Davis--Yin splitting if $\theta =1$ and $\eta =1$.

\item 
When $h\equiv0$, Algorithm~\ref{algo:DRFDR} reduces to the \emph{adaptive Douglas--Rachford splitting} \cite{DP19}, which is written as
\begin{align*}
\left\{
\begin{aligned}
&x_{n+1} \in \prox_{\gamma f}(z_n), \\
&y_{n+1} \in \prox_{\theta\gamma g}((\theta +1)x_{n+1} -\theta z_n), \\
&z_{n+1} =z_n +\eta(y_{n+1} -x_{n+1}).
\end{aligned}
\right.
\end{align*}
This algorithm becomes the Douglas--Rachford splitting if $\theta =1$ and $\eta =1$, and the \emph{Peaceman--Rachford splitting} \cite{PR55} if $\theta =1$ and $\eta =2$.

\item
When $f\equiv 0$ and $\theta =1$, Step~\ref{step:main} of Algorithm~\ref{algo:DRFDR} reduces to
\begin{align*}
\left\{
\begin{aligned}
&y_{n+1} \in \prox_{\gamma g}(z_n -\gamma \nabla \bar{h}(z_n) +\gamma y^*_n) \text{~with~} y^*_n \in \partial \ubar{h}(y_n), \\
&z_{n+1} =(1 -\eta)z_n +\eta y_{n+1},
\end{aligned}
\right.
\end{align*}
which is a relaxed version of the \emph{generalized proximal point algorithm} proposed in \cite{An2016} for solving \eqref{eq:DC}. It is worthwhile noting that, when $\bar{h} \equiv 0$, Step~\ref{step:main} of Algorithm~\ref{algo:DRFDR} becomes
\begin{align*}
\left\{
\begin{aligned}
&x_{n+1} \in \prox_{\gamma f}(z_n), \\
&y_{n+1} \in \prox_{\theta\gamma g}((\theta +1)x_{n+1} -\theta z_n +\theta\gamma y^*_n) \text{~with~} y^*_n \in \partial \ubar{h}(y_n), \\
&z_{n+1} =z_n +\eta(y_{n+1} -x_{n+1}),
\end{aligned}
\right.
\end{align*}
providing an alternative algorithm for the general DC program \eqref{eq:DC} with $\bar{h}$ replaced by $f$. 

\item 
When $f\equiv0$, $\ubar{h} \equiv 0$, $\theta =1$, and $\eta =1$, the updating scheme of Algorithm~\ref{algo:DRFDR} simplifies to
\begin{align*}
\left\{
\begin{aligned}
&y_{n+1} \in \prox_{\gamma g}(z_n -\gamma \nabla \bar{h}(z_n)), \\
&z_{n+1} =(1 -\eta)z_n +\eta y_{n+1},
\end{aligned}
\right.
\end{align*}
which is the \emph{relaxed forward-backward algorithm}; see \cite[Section~4]{BDP22}.
\end{enumerate}
\end{remark}

From now on, assume that $\mathcal{H}$ is finite dimensional and let $(x_n, y_n, z_n, y^*_{n-1})_{n\in \mathbb{N}^\ast}$ be a sequence generated by Algorithm~\ref{algo:DRFDR}. We start our analysis by some useful properties of the sequence $(x_n, y_n, z_n, y^*_{n-1})_{n\in \mathbb{N}^\ast}$.

\begin{lemma}\label{l:OptCond}
The following hold: 
\begin{enumerate}
\item\label{l:OptCond_f} 
For all $n\in \mathbb{N}^\ast$, $0\in \partial f(x_n) -\frac{1}{\gamma}(z_{n-1} -x_n)$.
\item\label{l:OptCond_g} 
For all $n\in \mathbb{N}^\ast$, $0\in \partial g(y_n) +\nabla \bar{h}(x_n) -y^*_{n-1} +\frac{1}{\theta\gamma}(y_n -x_n) +\frac{1}{\gamma}(z_{n-1} -x_n)$.
\item\label{l:OptCond_cluster} 
If $(y_{n+1}, z_{n+1}) -(y_n, z_n)\to 0$ as $n\to +\infty$ and there exists a subsequence $(x_{k_n}, y_{k_n}, z_{k_n})_{n\in \mathbb{N}}$ converging to $(\overline{x}, \overline{y}, \overline{z})$, then $f(x_{k_n})\to f(\overline{x})$, $g(y_{k_n})\to g(\overline{y})$, $\overline{x} =\overline{y}$, and $0\in \partial f(\overline{y}) +\partial g(\overline{y}) +\nabla \bar{h}(\overline{y}) -\partial \ubar{h}(\overline{y})$. 
\end{enumerate}
\end{lemma}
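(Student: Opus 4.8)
The plan is to obtain parts \ref{l:OptCond_f} and \ref{l:OptCond_g} directly from the first-order optimality conditions of the two proximal subproblems, and then to derive part \ref{l:OptCond_cluster} by passing to the limit in these conditions along the given subsequence.

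For \ref{l:OptCond_f}, since $x_n\in\prox_{\gamma f}(z_{n-1})$, the point $x_n$ minimizes $z\mapsto f(z)+\frac1{2\gamma}\|z-z_{n-1}\|^2$, so Fermat's rule together with the subdifferential sum rule (the quadratic term being smooth) gives $0\in\partial f(x_n)+\frac1\gamma(x_n-z_{n-1})$, which is the asserted inclusion. For \ref{l:OptCond_g}, the same reasoning applied to the $y$-subproblem yields $0\in\partial g(y_n)+\frac1{\theta\gamma}(y_n-w_{n-1})$, where $w_{n-1}=(\theta+1)x_n-\theta z_{n-1}-\theta\gamma\nabla\bar h(x_n)+\theta\gamma y^*_{n-1}$; writing $y_n-(\theta+1)x_n+\theta z_{n-1}=(y_n-x_n)-\theta(x_n-z_{n-1})$ and distributing the factor $\frac1{\theta\gamma}$ then produces the stated form. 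Both are routine.

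The content is in part \ref{l:OptCond_cluster}. First, from $z_{n+1}-z_n=\eta(y_{n+1}-x_{n+1})$, $\eta>0$, and the hypothesis $(y_{n+1},z_{n+1})-(y_n,z_n)\to0$, one gets $y_{n+1}-x_{n+1}\to0$ and $z_n-z_{n-1}\to0$; hence $\overline y-\overline x=\lim(y_{k_n}-x_{k_n})=0$, and also $z_{k_n-1}\to\overline z$ and $y_{k_n-1}\to\overline y$. To get $f(x_{k_n})\to f(\overline x)$, use that $x_{k_n}$ minimizes $f(\cdot)+\frac1{2\gamma}\|\cdot-z_{k_n-1}\|^2$ and compare with $\overline x$: $f(x_{k_n})\le f(\overline x)+\frac1{2\gamma}\big(\|\overline x-z_{k_n-1}\|^2-\|x_{k_n}-z_{k_n-1}\|^2\big)$; since $x_{k_n}\to\overline x$ and $z_{k_n-1}\to\overline z$ the right side tends to $f(\overline x)$, so $\limsup f(x_{k_n})\le f(\overline x)$, and lower semicontinuity gives the reverse inequality. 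For $g(y_{k_n})\to g(\overline y)$ the same comparison works provided the prox argument $w_{k_n-1}$ stays bounded: because $\ubar h$ is convex and finite-valued, $\partial\ubar h$ is locally bounded, so $(y^*_{k_n-1})$ is bounded (as $y_{k_n-1}\to\overline y$), and together with continuity of $\nabla\bar h$ this bounds $(w_{k_n-1})$; comparing $y_{k_n}$ with $\overline y$ and using $y_{k_n}\to\overline y$ gives $\limsup g(y_{k_n})\le g(\overline y)$, whence equality by lower semicontinuity (a subsequence extraction reduces the general $\limsup$ to the case where $y^*_{k_n-1}$ converges).

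Finally, pass to a further subsequence along which $y^*_{k_n-1}\to y^*$ (possible by finite-dimensionality); closedness of the graph of the convex subdifferential $\partial\ubar h$ gives $y^*\in\partial\ubar h(\overline y)$. Rewrite \ref{l:OptCond_f} as $\frac1\gamma(z_{k_n-1}-x_{k_n})\in\partial f(x_{k_n})$ and let $n\to\infty$: the robustness (closedness) of the limiting subdifferential, using $x_{k_n}\to\overline x$, $f(x_{k_n})\to f(\overline x)$, yields $\frac1\gamma(\overline z-\overline y)\in\partial f(\overline y)$. Likewise rewrite \ref{l:OptCond_g} as $y^*_{k_n-1}-\nabla\bar h(x_{k_n})-\frac1{\theta\gamma}(y_{k_n}-x_{k_n})-\frac1\gamma(z_{k_n-1}-x_{k_n})\in\partial g(y_{k_n})$ and pass to the limit (using $g(y_{k_n})\to g(\overline y)$, $y_{k_n}-x_{k_n}\to0$, continuity of $\nabla\bar h$) to get $y^*-\nabla\bar h(\overline y)-\frac1\gamma(\overline z-\overline y)\in\partial g(\overline y)$. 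Adding the two inclusions cancels $\frac1\gamma(\overline z-\overline y)$ and gives $0\in\partial f(\overline y)+\partial g(\overline y)+\nabla\bar h(\overline y)-y^*\subseteq\partial f(\overline y)+\partial g(\overline y)+\nabla\bar h(\overline y)-\partial\ubar h(\overline y)$. The main obstacle is the bookkeeping in part \ref{l:OptCond_cluster}: establishing that the prox arguments are bounded (hence, along a subsequence, convergent) so that the value convergences $f(x_{k_n})\to f(\overline x)$ and $g(y_{k_n})\to g(\overline y)$ hold, since these are exactly the hypotheses the robustness of the limiting subdifferential requires to transfer the inclusions to the limit.
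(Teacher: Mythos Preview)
Your proposal is correct and follows essentially the same approach as the paper: parts \ref{l:OptCond_f} and \ref{l:OptCond_g} are exactly the optimality conditions of the two proximal subproblems, and part \ref{l:OptCond_cluster} is obtained by the same chain of arguments (use the $z$-update to get $y_{k_n}-x_{k_n}\to 0$, $z_{k_n-1}\to\overline z$, $y_{k_n-1}\to\overline y$; use local boundedness of $\partial\ubar h$ to control $y^*_{k_n-1}$; compare $x_{k_n}$ with $\overline x$ and $y_{k_n}$ with $\overline y$ in the prox inequalities to get the value convergences; then pass to the limit in the inclusions). Your write-up is in fact slightly more explicit than the paper's in invoking the closedness of $\gra\partial f$ and $\gra\partial g$ under the value-convergence hypothesis, which is precisely why the convergences $f(x_{k_n})\to f(\overline x)$ and $g(y_{k_n})\to g(\overline y)$ are needed.
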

\begin{proof}
\ref{l:OptCond_f} \& \ref{l:OptCond_g}: This follows from the optimality condition of the $x$- and $y$-updates.

\ref{l:OptCond_cluster}: By the $z$-update, for all $n\in \mathbb{N}$, $y_{n+1} -x_{n+1} =\frac{1}{\eta}(z_{n+1} -z_n)\to 0$. Since $(x_{k_n}, y_{k_n}, z_{k_n})\to (\overline{x}, \overline{y}, \overline{z})$ and $(y_{n+1}, z_{n+1}) -(y_n, z_n)\to 0$ as $n\to +\infty$, we derive that $\overline{x} =\overline{y}$, $y_{k_n-1}\to \overline{y}$, and $z_{k_n-1}\to \overline{z}$. As $\ubar{h}$ is a continuous convex function, it follows from \cite[Proposition~16.17]{BC17} that $(y^*_{k_n-1})_{n\in \mathbb{N}}$ is bounded. Passing to a subsequence if necessary, we can and do assume that $y^*_{k_n-1}\to \overline{y}^*\in \partial \ubar{h}(\overline{y})$ as $n\to +\infty$.   

Next, the $x$-update implies that 
\begin{align*}
f(x_{k_n}) +\frac{1}{2\gamma}\|x_{k_n} -z_{k_n-1}\|^2 \leq f(\overline{x}) +\frac{1}{2\gamma}\|\overline{x} -z_{k_n-1}\|^2
\end{align*} 
and the $y$-update implies that 
\begin{align*}
&g(y_{k_n}) +\frac{1}{2\theta\gamma}\|y_{k_n} -(\theta +1)x_{k_n} +\theta z_{k_n-1} +\theta\gamma \nabla \bar{h}(x_{k_n}) -\theta\gamma y^*_{k_n-1}\|^2 \\
&\leq g(\overline{y}) +\frac{1}{2\theta\gamma}\|\overline{y} -(\theta +1)x_{k_n} +\theta z_{k_n-1} +\theta\gamma \nabla \bar{h}(x_{k_n}) -\theta\gamma y^*_{k_n-1}\|^2.
\end{align*}
Letting $n\to +\infty$ and using the continuity of $\nabla h$ yield 
\begin{align*}
\limsup_{n\to +\infty} f(x_{k_n})\leq f(\overline{x}) \quad\text{and}\quad 
\limsup_{n\to +\infty} g(y_{k_n})\leq g(\overline{y}).
\end{align*}
Combining with the lower semicontinuity of $f$ and $g$ yields $\lim_{n\to +\infty} f(x_{k_n}) =f(\overline{x})$ and $\lim_{n\to +\infty} g(y_{k_n}) =g(\overline{y})$. Now, in view of \ref{l:OptCond_f} and \ref{l:OptCond_g}, $\frac{1}{\gamma}(z_{k_n-1} -x_{k_n})\in \partial f(x_{k_n})$ and $-\frac{1}{\theta\gamma}(y_{k_n} -x_{k_n}) -\frac{1}{\gamma}(z_{k_n-1} -x_{k_n}) \in \partial g(y_{k_n}) +\nabla \bar{h}(x_{k_n}) -y^*_{k_n-1}$.
Passing to the limit, we have $\frac{1}{\gamma}(\overline{z} -\overline{x})\in \partial f(\overline{x})$ and $-\frac{1}{\gamma}(\overline{z} -\overline{x})\in \partial g(\overline{y}) +\nabla \bar{h}(\overline{x}) -\overline{y}^* \subseteq \partial g(\overline{y}) +\nabla \bar{h}(\overline{x}) -\partial \ubar{h}(\overline{y})$, hence $0\in \partial f(\overline{x}) +\partial g(\overline{y}) +\nabla \bar{h}(\overline{x}) -\partial \ubar{h}(\overline{y})$. Since $\overline{x} =\overline{y}$, the proof is complete. 
\end{proof}

\begin{lemma}
\label{l:fdiff}
Suppose that $f$ is differentiable with $\kappa$-Lipschitz continuous gradient. Then, for all $n\in \mathbb{N}^\ast$, the following hold: 
\begin{enumerate}
\item\label{l:fdiff_z} 
$z_{n-1} =x_n +\gamma\nabla f(x_n)$. 
\item\label{l:fdiff_zz}
$\|z_n -z_{n-1}\| \leq (1 +\gamma\kappa)\|x_{n+1} -x_n\|$.
\item\label{l:fdiff_yy}
$\|y_{n+1} -y_n\| \leq \frac{1 +\gamma\kappa}{\eta}\|x_{n+2} -x_{n+1}\| +\left(1 +\frac{1 +\gamma\kappa}{\eta}\right)\|x_{n+1} -x_n\|$.
\end{enumerate} 
\end{lemma}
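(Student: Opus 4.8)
The plan is to establish \ref{l:fdiff_z}, \ref{l:fdiff_zz}, \ref{l:fdiff_yy} in that order, since each is a short consequence of the previous one together with the optimality conditions already recorded in Lemma~\ref{l:OptCond}; the only substantive point is to exploit differentiability of $f$ to turn the proximal $x$-update into an explicit gradient-step identity.

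For \ref{l:fdiff_z}, I would start from Lemma~\ref{l:OptCond}\ref{l:OptCond_f}, i.e.\ $\tfrac{1}{\gamma}(z_{n-1}-x_n)\in\partial f(x_n)$. Since $f$ is differentiable on $\mathcal{H}$ with $\kappa$-Lipschitz gradient, it is $(-\kappa)$-convex by Proposition~\ref{p:diff}\ref{p:diff_descent}, so $\partial f=\widehat{\partial}f$ by Proposition~\ref{p:subdiff}\ref{p:subdiff_mono}, and for a differentiable function the regular subdifferential is the singleton $\{\nabla f(x_n)\}$ (equivalently, write $f=h+\tfrac{\kappa}{2}\|\cdot\|^2$ with $h$ convex and differentiable and use the subdifferential sum rule as in the proof of Proposition~\ref{p:subdiff}). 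Hence $\tfrac{1}{\gamma}(z_{n-1}-x_n)=\nabla f(x_n)$, which gives \ref{l:fdiff_z} after rearranging.

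For \ref{l:fdiff_zz}, I would apply \ref{l:fdiff_z} at $n$ and at $n+1$ to get $z_n-z_{n-1}=(x_{n+1}-x_n)+\gamma(\nabla f(x_{n+1})-\nabla f(x_n))$, then use the triangle inequality and the $\kappa$-Lipschitz continuity of $\nabla f$. For \ref{l:fdiff_yy}, the $z$-update yields $y_{n+1}=x_{n+1}+\tfrac{1}{\eta}(z_{n+1}-z_n)$ and likewise $y_n=x_n+\tfrac{1}{\eta}(z_n-z_{n-1})$; I would subtract these, apply the triangle inequality, and bound $\|z_{n+1}-z_n\|$ and $\|z_n-z_{n-1}\|$ using \ref{l:fdiff_zz} at indices $n+1$ and $n$, then collect the coefficient of $\|x_{n+1}-x_n\|$. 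I do not expect any real obstacle: each item is a one-line manipulation, and the only thing needing a moment's care is the reduction $\partial f=\{\nabla f\}$ in \ref{l:fdiff_z} and keeping the index shift straight — this is precisely why the bound on $\|y_{n+1}-y_n\|$ unavoidably involves $\|x_{n+2}-x_{n+1}\|$.
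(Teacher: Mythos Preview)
Your proposal is correct and follows essentially the same route as the paper: deduce \ref{l:fdiff_z} from Lemma~\ref{l:OptCond}\ref{l:OptCond_f} and differentiability of $f$, then obtain \ref{l:fdiff_zz} by differencing \ref{l:fdiff_z} at consecutive indices and using the Lipschitz bound, and finally get \ref{l:fdiff_yy} from the $z$-update identity $y_{n+1}-y_n=\tfrac{1}{\eta}(z_{n+1}-z_n)-\tfrac{1}{\eta}(z_n-z_{n-1})+(x_{n+1}-x_n)$ together with \ref{l:fdiff_zz}. Your extra justification that $\partial f(x_n)=\{\nabla f(x_n)\}$ is a harmless elaboration of what the paper leaves implicit.
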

\begin{proof}
\ref{l:fdiff_z}: This follows from Lemma~\ref{l:OptCond}\ref{l:OptCond_f} and the differentiability of $f$.

\ref{l:fdiff_zz}: We have from \ref{l:fdiff_z} that $z_n -z_{n-1} =(x_{n+1} -x_n) +\gamma(\nabla f(x_{n+1}) -\nabla f(x_n))$ and the conclusion then follows from the Lipschitz continuity of $\nabla f$.

\ref{l:fdiff_yy}: By the $z$-update, $y_{n+1} -y_n =\frac{1}{\eta}(z_{n+1} -z_n) -\frac{1}{\eta}(z_n -z_{n-1}) +(x_{n+1} -x_n)$, which together with \ref{l:fdiff_zz} completes the proof.
\end{proof}

In the upcoming analyses, we will rely on the following assumption.
\begin{assumption}
[Standing assumptions]
\label{a:standing}
~
\begin{enumerate}
\item\label{a:standing_f}
$f\colon \mathcal{H}\to \mathbb{R}$ a differentiable $\alpha$-convex function with $\kappa$-Lipschitz continuous gradient and $\alpha \geq -\kappa$.
\item 
$g\colon \mathcal{H}\to \left(-\infty, +\infty\right]$ is a proper lower semicontinuous function. 
\item 
$h =\bar{h} -\ubar{h}$, where $\bar{h}\colon \mathcal{H}\to \mathbb{R}$ is a differentiable function with $\ell$-Lipschitz continuous gradient, and $\ubar{h}\colon \mathcal{H}\to \mathbb{R}$ is a continuous convex function.
\end{enumerate}
\end{assumption}

\begin{remark}[Relationship between $\alpha$ and $\kappa$]
\label{r:alphakappa}
Regarding Assumption~\ref{a:standing}\ref{a:standing_f}, we note from Proposition~\ref{p:diff}\ref{p:diff_descent} that a differentiable function with $\kappa$-Lipschitz continuous gradient is always $(-\kappa)$-convex. Therefore, the assumption on $\alpha$-convexity of $f$ only makes sense if $\alpha \geq -\kappa$.

By Proposition~\ref{p:subdiff}\ref{p:subdiff_mono}, it follows from Assumption~\ref{a:standing}\ref{a:standing_f} that, for all $x, y\in \mathcal{H}$, $\alpha\|y -x\|^2 \leq \langle \nabla f(y) -\nabla f(x), y -x\rangle \leq \kappa\|y -x\|^2$. As a result, $\alpha \leq \kappa$. 
\end{remark} 

Let us now consider
\begin{align}\label{eq:Lyapunov}
L_n &= f(x_n) +g(y_n) +\bar{h}(x_n) +\langle \nabla \bar{h}(x_n), y_n -x_n\rangle -\ubar{h}(y_{n-1}) \notag \\
&\quad   -\langle y^*_{n-1}, y_n -y_{n-1}\rangle +\frac{1}{\gamma}\langle z_n -x_n, y_n -x_n\rangle -\frac{2\eta\theta -1}{2\theta\gamma}\|y_n -x_n\|^2
\end{align}
and
\begin{align*}
\varphi(\gamma) =2\theta\kappa(\kappa +\ell)\gamma^2 -\left((\eta\theta +2 -2\theta)\alpha -(3\eta -2)\theta\ell\right)\gamma +\eta -2.
\end{align*}
The following lemma gives a sufficient decrease as well as lower and upper bounds for the sequence $(L_n)_{n\in \mathbb{N}^\ast}$. In particular, its proof also explains how $(L_n)_{n\in \mathbb{N}^\ast}$ is constructed.

\begin{lemma}[Sufficient decrease]
\label{l:Lfun}
Suppose that Assumption~\ref{a:standing} holds and that $(\eta -1)\ell \geq 0$. Then, for all $n\in \mathbb{N}^\ast$, the following hold:
\begin{enumerate}
\item\label{l:Lfun_decrease}
$L_{n+1} -L_n \leq \frac{\varphi(\gamma)}{2\eta\theta\gamma}\|x_{n+1} -x_n\|^2$. Consequently, the sequence $(L_n)_{n\in \mathbb{N}^\ast}$ is nonincreasing provided that $\varphi(\gamma) <0$.
\item\label{l:Lfun_lb} 
$L_n \geq F(y_n) +\frac{1}{2}\left(\frac{1}{\theta\gamma} -\kappa -\ell\right)\|y_n -x_n\|^2$.
\item\label{l:Lfun_ub}
$L_n \leq F(y_n) +\langle y^*_n -y^*_{n-1}, y_n -y_{n-1}\rangle +\frac{1}{2}\left(\frac{1}{\theta\gamma} +\kappa +\ell\right)\|y_n -x_n\|^2$.   
\end{enumerate}
\end{lemma}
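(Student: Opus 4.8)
The plan is to first recast $L_n$ in a transparent ``linearized'' form. From the $z$-update $z_n = z_{n-1} + \eta(y_n - x_n)$ and Lemma~\ref{l:fdiff}\ref{l:fdiff_z} one has $\frac1\gamma(z_n - x_n) = \nabla f(x_n) + \frac\eta\gamma(y_n - x_n)$, and substituting this into \eqref{eq:Lyapunov} collapses its last two terms, giving
\[
L_n = \tilde f_n(y_n) + g(y_n) + \tilde{\bar h}_n(y_n) - \tilde{\ubar h}_{n-1}(y_n) + \tfrac{1}{2\theta\gamma}\|y_n - x_n\|^2,
\]
where $\tilde f_n(y):= f(x_n)+\langle\nabla f(x_n),y-x_n\rangle$, $\tilde{\bar h}_n(y):= \bar h(x_n)+\langle\nabla\bar h(x_n),y-x_n\rangle$, and $\tilde{\ubar h}_{n-1}(y):= \ubar h(y_{n-1})+\langle y^*_{n-1},y-y_{n-1}\rangle$; in other words $L_n$ is the value at $y_n$ of the DC-linearized proximal model of $F$ with base point $x_n$ (concave part linearized at $y_{n-1}$). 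For \ref{l:Lfun_lb} and \ref{l:Lfun_ub} I would subtract $F(y_n) = f(y_n)+g(y_n)+\bar h(y_n)-\ubar h(y_n)$: the $g$-terms cancel; the two-sided descent estimates of Proposition~\ref{p:diff}\ref{p:diff_descent} applied to $f$ (with $\kappa$) and to $\bar h$ (with $\ell$) keep $\tilde f_n(y_n)-f(y_n)$ and $\tilde{\bar h}_n(y_n)-\bar h(y_n)$ within $\pm\frac\kappa2\|y_n-x_n\|^2$ and $\pm\frac\ell2\|y_n-x_n\|^2$; and the subgradient inequality for the convex $\ubar h$ — used at $y_{n-1}$ for the lower bound, at $y_n$ for the upper bound — sandwiches $\ubar h(y_n) - \tilde{\ubar h}_{n-1}(y_n)$ between $0$ and $\langle y^*_n - y^*_{n-1}, y_n - y_{n-1}\rangle$. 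Collecting the coefficients of $\|y_n-x_n\|^2$ yields \ref{l:Lfun_lb} and \ref{l:Lfun_ub}.

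For the sufficient decrease \ref{l:Lfun_decrease} the crucial point — which is what dictates the form of $L_n$ — is that, setting $\tilde f_{n+1}$, $\tilde{\bar h}_{n+1}$, $\tilde{\ubar h}_n$ analogously, Remark~\ref{r:structure}\ref{r:structure_update} together with $\frac1\gamma(z_n-x_{n+1})=\nabla f(x_{n+1})$ (Lemma~\ref{l:fdiff}\ref{l:fdiff_z}) shows that $y_{n+1}$ is a minimizer of
\[
\Psi_{n+1}(y) := \tilde f_{n+1}(y) + g(y) + \tilde{\bar h}_{n+1}(y) - \tilde{\ubar h}_n(y) + \tfrac{1}{2\theta\gamma}\|y - x_{n+1}\|^2,
\]
while the same substitution as above identifies $L_{n+1}$ with $\Psi_{n+1}(y_{n+1}) = \min_y \Psi_{n+1}(y)$. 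Hence $L_{n+1} \le \Psi_{n+1}(y_n)$, and it remains to bound
\[
\Psi_{n+1}(y_n)-L_n = \big[\tilde f_{n+1}(y_n)-\tilde f_n(y_n)\big] + \big[\tilde{\bar h}_{n+1}(y_n)-\tilde{\bar h}_n(y_n)\big] + \big[\tilde{\ubar h}_{n-1}(y_n)-\tilde{\ubar h}_n(y_n)\big] + \tfrac{1}{2\theta\gamma}\big(\|y_n-x_{n+1}\|^2-\|y_n-x_n\|^2\big)
\]
by a multiple of $\|x_{n+1}-x_n\|^2$. The engine is the identity $y_n - x_n = \frac1\eta(z_n - z_{n-1}) = \frac1\eta\big((x_{n+1}-x_n) + \gamma(\nabla f(x_{n+1})-\nabla f(x_n))\big)$ (again from the $z$-update and Lemma~\ref{l:fdiff}\ref{l:fdiff_z}), which also gives $y_n - x_{n+1} = \frac{1-\eta}{\eta}(x_{n+1}-x_n) + \frac\gamma\eta(\nabla f(x_{n+1})-\nabla f(x_n))$.

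Concretely I would: write $\tilde f_{n+1}(y_n)-\tilde f_n(y_n) = \big(f(x_{n+1})+\langle\nabla f(x_{n+1}),x_n-x_{n+1}\rangle-f(x_n)\big) + \langle\nabla f(x_{n+1})-\nabla f(x_n),\,y_n-x_n\rangle$, bound the parenthesis by $-\frac\alpha2\|x_{n+1}-x_n\|^2$ via $\alpha$-convexity (Proposition~\ref{p:diff}\ref{p:diff_alpha-cvx}) and expand the inner product through the formula for $y_n-x_n$, keeping $s:=\langle\nabla f(x_{n+1})-\nabla f(x_n),x_{n+1}-x_n\rangle$ symbolic and using $\|\nabla f(x_{n+1})-\nabla f(x_n)\|^2\le\kappa^2\|x_{n+1}-x_n\|^2$; write $\tilde{\bar h}_{n+1}(y_n)-\tilde{\bar h}_n(y_n) = \big(\bar h(x_{n+1})-\bar h(x_n)-\langle\nabla\bar h(x_n),x_{n+1}-x_n\rangle\big) + \langle\nabla\bar h(x_{n+1})-\nabla\bar h(x_n),\,y_n-x_{n+1}\rangle$, bound the parenthesis by $\frac\ell2\|x_{n+1}-x_n\|^2$ and expand the inner product through the formula for $y_n-x_{n+1}$ — here, and only here, the hypothesis $(\eta-1)\ell\ge0$ enters, via $\langle\nabla\bar h(x_{n+1})-\nabla\bar h(x_n),x_{n+1}-x_n\rangle\ge-\ell\|x_{n+1}-x_n\|^2$ together with the nonpositive factor $\frac{1-\eta}{\eta}$; observe $\tilde{\ubar h}_{n-1}(y_n)-\tilde{\ubar h}_n(y_n) = \ubar h(y_{n-1})+\langle y^*_{n-1},y_n-y_{n-1}\rangle-\ubar h(y_n)\le0$ by the subgradient inequality; and compute $\frac{1}{2\theta\gamma}\big(\|y_n-x_{n+1}\|^2-\|y_n-x_n\|^2\big) = \frac{\eta-2}{2\eta\theta\gamma}\|x_{n+1}-x_n\|^2 - \frac{s}{\eta\theta}$. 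The $s$-terms then combine to $\frac{\theta-1}{\eta\theta}\,s$, which is $\le\frac{(\theta-1)\alpha}{\eta\theta}\|x_{n+1}-x_n\|^2$ since $\theta\le1$ and $s\ge\alpha\|x_{n+1}-x_n\|^2$. Adding all contributions and multiplying by $2\eta\theta\gamma$ reproduces $\varphi(\gamma)$ exactly, which is \ref{l:Lfun_decrease}; the final assertion that $(L_n)_{n\in\mathbb{N}^\ast}$ is nonincreasing when $\varphi(\gamma)<0$ is then immediate. The main obstacle is purely the sign bookkeeping of this last computation — in particular, realizing that the $\tilde f$- and $\tilde{\bar h}$-differences must be grouped in \emph{opposite} ways (cross term over $y_n-x_n$ for $f$, over $y_n-x_{n+1}$ for $\bar h$) so that the $\alpha$-dependence lines up with $\varphi$ and the hypothesis $(\eta-1)\ell\ge0$ can be brought to bear.
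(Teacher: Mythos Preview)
Your argument is correct. Parts~\ref{l:Lfun_lb} and~\ref{l:Lfun_ub} are essentially identical to the paper's proof: both rewrite $L_n$ in the linearized form \eqref{eq:Lyapunov'} and then apply the two-sided descent estimates for $f$ and $\bar h$ together with the subgradient inequality for $\ubar h$.

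For part~\ref{l:Lfun_decrease} your route is genuinely different in organization. The paper works with the original expression \eqref{eq:Lyapunov} for $L_n$, combines the $x$- and $y$-update inequalities \eqref{eq:fx+}--\eqref{eq:gy+} with the inner-product identity of Lemma~\ref{l:2products}, and---crucially---handles the $\bar h$ cross term via a Young inequality with an auxiliary parameter $\omega$ (see \eqref{eq:nabla h}); this leaves a bound \eqref{eq:LL} in the three quantities $\|x_{n+1}-x_n\|^2$, $\|y_n-x_{n+1}\|^2$, $\|y_n-x_n\|^2$, which after substituting the $z$-update relation is \emph{optimized} over $\omega$ to reach $\varphi(\gamma)$. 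The hypothesis $(\eta-1)\ell\ge0$ enters in this optimization, to ensure the chosen $\omega$ is positive. You instead stay in the linearized form throughout, identify $L_{n+1}=\min\Psi_{n+1}$, keep the monotonicity cross term $s=\langle\nabla f(x_{n+1})-\nabla f(x_n),x_{n+1}-x_n\rangle$ symbolic, and only invoke $s\ge\alpha\|x_{n+1}-x_n\|^2$ after the $s$-contributions have combined to the nonpositive factor $\frac{\theta-1}{\eta\theta}$. This bypasses the $\omega$-optimization entirely and is arguably more transparent; the paper's approach, by contrast, is the more classical ``introduce a free parameter and tune it'' device, which can be more flexible in other settings but here lands on exactly the same constant. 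One small point you leave implicit: when you expand $\langle\nabla\bar h(x_{n+1})-\nabla\bar h(x_n),\,y_n-x_{n+1}\rangle$ via $y_n-x_{n+1}=\frac{1-\eta}{\eta}(x_{n+1}-x_n)+\frac{\gamma}{\eta}(\nabla f(x_{n+1})-\nabla f(x_n))$, the second summand is bounded by $\frac{\gamma\kappa\ell}{\eta}\|x_{n+1}-x_n\|^2$ via Cauchy--Schwarz and the two Lipschitz constants---this is the contribution that supplies the $\kappa\ell$ part of the leading coefficient $2\theta\kappa(\kappa+\ell)$ in $\varphi(\gamma)$.
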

\begin{proof}
Let $n\in \mathbb{N}$. As $x_{n+1}$ is a minimizer of the $(\alpha +\frac{1}{\gamma})$-convex function $f +\frac{1}{2\gamma}\|\cdot -z_n\|^2$, it follows from Proposition~\ref{p:subdiff}\ref{p:subdiff_ineq} that
\begin{align}\label{eq:fx+}
&f(x_{n+1}) +\frac{1}{2\gamma}\|x_{n+1} -z_n\|^2 \notag \\
&\leq f(x_n) +\frac{1}{2\gamma}\|x_n -z_n\|^2 -\left(\frac{\alpha}{2} +\frac{1}{2\gamma}\right)\|x_{n+1} -x_n\|^2.
\end{align}
In view of Remark~\ref{r:structure}\ref{r:structure_update},
\begin{align}\label{eq:gy+}
&g(y_{n+1}) +\bar{h}(x_{n+1}) +\langle \nabla \bar{h}(x_{n+1}), y_{n+1} -x_{n+1}\rangle -\ubar{h}(y_n) -\langle y^*_n, y_{n+1} -y_n \rangle \notag \\
&\qquad +\frac{1}{\gamma}\langle z_n -x_{n+1}, y_{n+1} -x_{n+1}\rangle +\frac{1}{2\theta\gamma}\|x_{n+1} -y_{n+1}\|^2 \notag \\
&\leq g(y_n) +\bar{h}(x_{n+1}) +\langle \nabla \bar{h}(x_{n+1}), y_n -x_{n+1}\rangle -\ubar{h}(y_n) \notag \\ 
&\qquad +\frac{1}{\gamma}\langle z_n -x_{n+1}, y_n -x_{n+1}\rangle +\frac{1}{2\theta\gamma}\|y_n -x_{n+1}\|^2.
\end{align}
We note from the $z$-update that
\begin{align}\label{eq:x+-y+}
&\langle z_n -x_{n+1}, y_{n+1} -x_{n+1}\rangle \notag \\
&= \langle z_{n+1} -x_{n+1}, y_{n+1} -x_{n+1}\rangle -\langle z_{n+1} -z_n, y_{n+1} -x_{n+1}\rangle \notag \\
&= \langle z_{n+1} -x_{n+1}, y_{n+1} -x_{n+1}\rangle -\eta\|y_{n+1} -x_{n+1}\|^2,
\end{align}
and from Lemma~\ref{l:2products} that
\begin{align}\label{eq:x+-y}
\langle z_n -x_{n+1}, y_n -x_{n+1}\rangle &= \langle z_n -x_n, y_n -x_n\rangle +\frac{1}{2}(\|x_{n+1} -z_n\|^2 -\|x_n -z_n\|^2) \notag \\
&\hspace{2cm} +\frac{1}{2}(\|y_n -x_{n+1}\|^2 -\|y_n -x_n\|^2).
\end{align}
For $\omega \in (0, +\infty)$,
\begin{align}\label{eq:nabla h}
&\bar{h}(x_{n+1}) +\langle \nabla \bar{h}(x_{n+1}), y_n -x_{n+1}\rangle \notag \\
&= \bar{h}(x_{n+1}) +\langle \nabla \bar{h}(x_n), y_n -x_{n+1}\rangle +\langle \nabla \bar{h}(x_{n+1}) -\nabla \bar{h}(x_n), y_n -x_{n+1}\rangle \notag \\
&= \bar{h}(x_{n+1}) +\langle \nabla \bar{h}(x_n), x_n -x_{n+1}\rangle +\langle \nabla \bar{h}(x_n), y_n -x_n\rangle \notag \\
&\quad +\langle \nabla \bar{h}(x_{n+1}) -\nabla \bar{h}(x_n), y_n -x_{n+1}\rangle \notag \\
&\leq \bar{h}(x_n) +\frac{\ell}{2}\|x_{n+1} -x_n\|^2 +\langle \nabla \bar{h}(x_n), y_n -x_n\rangle +\ell\|x_{n+1} -x_n\|\|y_n -x_{n+1}\| \notag \\
&\leq \bar{h}(x_n) +\langle \nabla \bar{h}(x_n), y_n -x_n\rangle +\left(\frac{\ell}{2} +\frac{\ell\omega}{2}\right)\|x_{n+1} -x_n\|^2 +\frac{\ell}{2\omega}\|y_n -x_{n+1}\|^2,
\end{align}
where the first inequality holds due to the Lipschiz continuity of $\nabla h$ and Proposition~\ref{p:diff}\ref{p:diff_descent}, while the last one follows from Young inequality. 

Now, let $n\in \mathbb{N}^\ast$. Since $y^*_{n-1} \in \partial \ubar{h}(y_{n-1})$, the convexity of $\ubar{h}$ along with \eqref{eq:cvxsubdiff} yields
\begin{align}\label{eq:p cvx}
\ubar{h}(y_{n-1}) +\langle y^*_{n-1}, y_n -y_{n-1}\rangle \leq \ubar{h}(y_n).
\end{align}
We deduce from \eqref{eq:fx+}, \eqref{eq:gy+}, \eqref{eq:x+-y+}, \eqref{eq:x+-y}, \eqref{eq:nabla h}, and \eqref{eq:p cvx} that
\begin{align}\label{eq:LL}
L_{n+1} -L_n &\leq \left(-\frac{1}{2\gamma} -\frac{\alpha}{2} +\frac{\ell}{2} +\frac{\ell\omega}{2}\right)\|x_{n+1}-x_n\|^2 \notag \\
&\quad +\left(\frac{\ell}{2\omega} +\frac{\theta +1}{2\theta\gamma}\right)\|y_n -x_{n+1}\|^2 +\frac{2\eta\theta -\theta -1}{2\theta\gamma}\|y_n -x_n\|^2.
\end{align}
Next, it follows from the $z$-update and Lemma~\ref{l:fdiff}\ref{l:fdiff_z} that
\begin{align*}
y_n -x_n = \frac{1}{\eta}(z_n -z_{n-1}) = \frac{1}{\eta}(x_{n+1} -x_n) +\frac{\gamma}{\eta}(\nabla f(x_{n+1}) -\nabla f(x_n)),
\end{align*}
and so
\begin{align*}
y_n -x_{n+1} &= (y_n -x_n) -(x_{n+1} -x_n) \\
&= -\frac{\eta -1}{\eta}(x_{n+1} -x_n) +\frac{\gamma}{\eta}(\nabla f(x_{n+1}) -\nabla f(x_n)).
\end{align*}
Since $f$ is $\alpha$-convex, we have from Proposition~\ref{p:subdiff}\ref{p:subdiff_mono} that
\begin{align*}
\langle x_{n+1} -x_n, \nabla f(x_{n+1}) -\nabla f(x_n)\rangle \geq \alpha\|x_{n+1} -x_n\|^2.    
\end{align*}  
By combining with \eqref{eq:LL} and the Lipschitz continuity of $\nabla f$, we derive that
\begin{align*}
&L_{n+1} -L_n\\
&\leq \left(-\frac{1}{2\gamma} -\frac{\alpha-\ell-\ell\omega}{2}  +\frac{(\eta -1)^2\ell}{2\eta^2\omega} +\frac{(\eta -1)^2(\theta +1)}{2\eta^2\theta\gamma} +\frac{2\eta\theta -\theta -1}{2\eta^2\theta\gamma}\right)\|x_{n+1}-x_n\|^2 \\
&\quad -\left(\frac{(\eta -1)\ell\gamma}{\eta^2\omega} +\frac{(\eta -1)(\theta +1)}{\eta^2\theta} -\frac{2\eta\theta -\theta -1}{\eta^2\theta}\right)\langle x_{n+1} -x_n, \nabla f(x_{n+1}) -\nabla f(x_n)\rangle \\
&\quad +\left(\frac{\ell}{2\omega} +\frac{\theta +1}{2\theta\gamma} +\frac{2\eta\theta -\theta -1}{2\theta\gamma}\right)\frac{\gamma^2}{\eta^2}\|\nabla f(x_{n+1}) -\nabla f(x_n)\|^2 \\
&= \left(\frac{\eta -2}{2\eta\theta\gamma} -\frac{\alpha}{2} +\frac{\ell}{2} +\frac{\ell\omega}{2} +\frac{(\eta -1)^2\ell}{2\eta^2\omega}\right)\|x_{n+1}-x_n\|^2 \\
&\quad -\left(\frac{(\eta -1)\ell\gamma}{\eta^2\omega} +\frac{1 -\theta}{\eta\theta}\right)\langle x_{n+1} -x_n, \nabla f(x_{n+1}) -\nabla f(x_n)\rangle \\
&\quad +\left(\frac{\ell\gamma^2}{2\eta^2\omega} +\frac{\gamma}{\eta}\right)\|\nabla f(x_{n+1}) -\nabla f(x_n)\|^2 \\
&\leq \Psi(\gamma)\|x_{n+1}-x_n\|^2,  
\end{align*}
where
\begin{align*}
\Psi(\gamma) &:= \frac{\eta -2}{2\eta\theta\gamma} -\frac{\alpha}{2} +\frac{\ell}{2} +\frac{\ell\omega}{2} +\frac{(\eta -1)^2\ell}{2\eta^2\omega} -\frac{(\eta -1)\alpha\ell\gamma}{\eta^2\omega} -\frac{(1 -\theta)\alpha}{\eta\theta} +\frac{\kappa^2\ell\gamma^2}{2\eta^2\omega} +\frac{\kappa^2\gamma}{\eta} \\
&= \frac{\eta -2}{2\eta\theta\gamma} -\frac{(\eta\theta +2 -2\theta)\alpha}{2\eta\theta} +\frac{\ell}{2} +\ell\psi(\omega) +\frac{\gamma\kappa^2}{\eta}
\end{align*}
with
\begin{align*}
\psi(\omega) :=\frac{\omega}{2} +\frac{(\eta -1)^2}{2\eta^2\omega} -\frac{(\eta -1)\alpha\gamma}{\eta^2\omega} +\frac{\kappa^2\gamma^2}{2\eta^2\omega} =\frac{\omega}{2} +\frac{(\eta -1 +\gamma\kappa)^2 -2(\eta -1)(\alpha +\kappa)\gamma}{2\eta^2\omega}.
\end{align*}
If $\ell >0$, then $\eta \geq 1$ and, since $\alpha \geq -\kappa$, we can choose $\omega =\frac{\eta -1 +\gamma\kappa +\sqrt{2(\eta -1)(\alpha +\kappa)\gamma}}{\eta} >0$ to obtain $\psi(\omega) =\frac{\eta -1 +\kappa\gamma}{\eta}$, which implies that
\begin{align*}
\Psi(\gamma) &=\frac{\eta -2}{2\eta\theta\gamma} -\frac{(\eta\theta +2 -2\theta)\alpha}{2\eta\theta} +\frac{\ell}{2} +\frac{(\eta -1 +\kappa\gamma)\ell}{\eta} +\frac{\gamma\kappa^2}{\eta} \\
&= \frac{\eta -2}{2\eta\theta\gamma} -\frac{(\eta\theta +2 -2\theta)\alpha}{2\eta\theta} +\frac{(3\eta -2)\ell}{2\eta} +\frac{\kappa(\kappa +\ell)\gamma}{\eta} \\
&= \frac{\eta -2 -\left((\eta\theta +2 -2\theta)\alpha -(3\eta -2)\theta\ell\right)\gamma +2\theta\kappa(\kappa +\ell)\gamma^2}{2\eta\theta\gamma}.
\end{align*}
As the latter also holds when $\ell =0$, we get \ref{l:Lfun_decrease}.

To get \ref{l:Lfun_lb} and \ref{l:Lfun_ub}, we first have from the $z$-update and Lemma~\ref{l:fdiff}\ref{l:fdiff_z} that
\begin{align*}
z_n =z_{n-1} +\eta(y_n -x_n) =x_n +\gamma \nabla f(x_n) +\eta(y_n -x_n).   
\end{align*}
Substituting this into \eqref{eq:Lyapunov} yields
\begin{align}\label{eq:Lyapunov'}
L_n &=f(x_n) +\langle \nabla f(x_n), y_n -x_n\rangle +g(y_n) +\bar{h}(x_n) +\langle \nabla \bar{h}(x_n), y_n -x_n\rangle \notag \\ 
&\quad -\ubar{h}(y_{n-1}) -\langle y^*_{n-1}, y_n -y_{n-1}\rangle +\frac{1}{2\theta\gamma}\|y_n -x_n\|^2.
\end{align}
In view of Proposition~\ref{p:diff}\ref{p:diff_descent}, the Lipschitz continuity of $\nabla f$ and $\nabla h$ gives
\begin{align*}
f(y_n) -\frac{\kappa}{2}\|y_n -x_n\|^2 &\leq f(x_n) +\langle \nabla f(x_n), y_n -x_n\rangle \leq f(y_n) +\frac{\kappa}{2}\|y_n -x_n\|^2, \\
\bar{h}(y_n) -\frac{\ell}{2}\|y_n -x_n\|^2 &\leq \bar{h}(x_n) +\langle \nabla \bar{h}(x_n), y_n -x_n\rangle \leq \bar{h}(y_n) +\frac{\ell}{2}\|y_n -x_n\|^2.
\end{align*}
As $y^*_{n-1}\in \partial \ubar{h}(y_{n-1})$ and $y^*_n\in \partial \ubar{h}(y_n)$, it follows from the convexity of $\ubar{h}$ that
\begin{align*}
\ubar{h}(y_n) -\langle y^*_n, y_n -y_{n-1}\rangle \leq \ubar{h}(y_{n-1}) \leq \ubar{h}(y_n) -\langle y^*_{n-1}, y_n -y_{n-1}\rangle
\end{align*}
Combining these inequalities with \eqref{eq:Lyapunov'}, we get \ref{l:Lfun_lb} and \ref{l:Lfun_ub}.
\end{proof}

As observed in Lemma~\ref{l:Lfun}, to obtain the sufficient decrease property of $(L_n)_{n\in \mathbb{N}^\ast}$, we require that $\varphi(\gamma) <0$. Let us now explore the condition for $\gamma$ further.

\begin{remark}[Condition for parameter $\gamma$]
\label{r:gamma}
We first examine the scenario where $\kappa =0$. Then $\alpha =0$ (see Remark~\ref{r:alphakappa}) and
\begin{align*}
\varphi(\gamma) <0 \iff (3\eta -2)\theta\ell\gamma +\eta -2 <0 \iff \left(\eta \leq \frac{2}{3}\right) \text{~~or~~} \left(\frac{2}{3} < \eta < 2 \text{~and~} \ell\gamma < \frac{2 -\eta}{(3\eta -2)\theta}\right).    
\end{align*}

Next, assume that $\kappa >0$. Then $\varphi$ is a quadratic polynomial of $\gamma$ with leading coefficient $2\theta\kappa(\kappa +\ell) >0$ and discriminant $\Delta :=\left((\eta\theta +2 -2\theta)\alpha -(3\eta -2)\theta\ell\right)^2 -8(\eta -2)\theta\kappa(\kappa +\ell)$. Therefore,
\begin{align*}
\varphi(\gamma) <0 \iff (\Delta >0 \text{~and~} \underline{\gamma} <\gamma <\overline{\gamma}),     
\end{align*}
where $\underline{\gamma} :=\frac{(\eta\theta +2 -2\theta)\alpha -(3\eta -2)\theta\ell -\sqrt{\Delta}}{4\theta\kappa(\kappa +\ell)}$ and $\overline{\gamma} :=\frac{(\eta\theta +2 -2\theta)\alpha -(3\eta -2)\theta\ell +\sqrt{\Delta}}{4\theta\kappa(\kappa +\ell)}$. 
\begin{enumerate}
\item
Given that $\gamma\in (0, +\infty)$, we consider the following cases.

\emph{Case 1:} $\eta <2$. Then $\Delta >0$, $\overline{\gamma} >0$, and
\begin{align*}
(\varphi(\gamma) <0 \text{~and~} \gamma\in (0, +\infty)) \iff \gamma\in (0, \overline{\gamma}).     
\end{align*}

\emph{Case 2:} $\eta \geq 2$. Then $\underline{\gamma}\overline{\gamma} =\frac{\eta -2}{2\theta\kappa(\kappa +\ell)} \geq 0$ provided that $\Delta >0$. Thus,
\begin{align*}
(\Delta >0 \text{~and~} \overline{\gamma} >0) &\iff \left(\Delta >0 \text{~and~}\frac{(\eta\theta +2 -2\theta)\alpha -(3\eta -2)\theta\ell}{2\theta\kappa(\kappa +\ell)} =\underline{\gamma} +\overline{\gamma} >0\right) \\
&\iff (\eta\theta +2 -2\theta)\alpha -(3\eta -2)\theta\ell >\sqrt{8(\eta -2)\theta\kappa(\kappa +\ell)}.
\end{align*}
Since $\eta\theta +2 -2\theta >0$, we deduce that 
\begin{align*}
(\varphi(\gamma) <0 \text{~and~} \gamma\in (0, +\infty)) \iff \left(\alpha >\frac{(3\eta -2)\theta\ell +2\sqrt{2(\eta -2)\theta\kappa(\kappa +\ell)}}{\eta\theta +2 -2\theta} \text{~and~} \gamma\in (\underline{\gamma}, \overline{\gamma})\right),     
\end{align*}
where the condition for $\alpha$ is ensured when either $\alpha >0$ and $\theta$ is sufficiently small, or $\alpha >2\theta\ell$ and $\eta$ is sufficiently close to $2$. We should also keep in mind that $\alpha \leq \kappa$.

\item
As seen later in Theorem~\ref{t:subcvg}, to guarantee the boundedness of the sequence generated by Algorithm~\ref{algo:DRFDR}, it is essential to require $\gamma <\frac{1}{\theta(\kappa +\ell)}$. We observe that, since $\alpha \leq \kappa$,
\begin{align*}
\varphi\left(\frac{1}{\theta(\kappa +\ell)}\right) &= \frac{2\kappa}{\theta(\kappa +\ell)} -\frac{(\eta\theta +2 -2\theta)\alpha -(3\eta -2)\theta\ell}{\theta(\kappa +\ell)} +\eta -2 \\
&\geq \frac{2\kappa -(\eta\theta +2 -2\theta)\kappa +(3\eta -2)\theta\ell +(\eta -2)\theta(\kappa +\ell)}{\theta(\kappa +\ell)} \\
&= \frac{4(\eta -1)\ell}{\kappa +\ell}.
\end{align*}
Suppose that $(\eta -1)\ell \geq 0$. Then $\varphi\left(\frac{1}{\theta(\kappa +\ell)}\right)\geq 0$. Hence, in the case when $\Delta >0$, we must have $\frac{1}{\theta(\kappa +\ell)} \notin (\underline{\gamma}, \overline{\gamma})$, and so
\begin{align*}
\underline{\gamma} < \frac{1}{\theta(\kappa +\ell)} &\iff \overline{\gamma} \leq \frac{1}{\theta(\kappa +\ell)} \\ 
&\iff \frac{\eta -2}{2\theta\kappa(\kappa +\ell)} =\underline{\gamma}\overline{\gamma} < \left(\frac{1}{\theta(\kappa +\ell)}\right)^2 \\
&\iff \eta <2 +\frac{2\kappa}{\theta(\kappa +\ell)}.
\end{align*}
\end{enumerate}
\end{remark}

We now arrive at the main convergence results, indicating that the DRFDR indeed offers flexibility with two adjustable relaxation parameters $\theta$ and $\eta$. Even when these parameters are set to match those of existing algorithms, our proposed algorithm allows for a larger range of the stepsize parameter $\gamma$.

\begin{theorem}[Subsequential convergence]
\label{t:subcvg}
Suppose that Assumption~\ref{a:standing} holds, that $F$ is coercive, and that either
\begin{enumerate}
\renewcommand\theenumi{(\alph{enumi})}
\renewcommand{\labelenumi}{\rm (\alph{enumi})}
\item
$\kappa >0$, $\eta \in (0, 2)$ if $\ell =0$, $\eta \in [1, 2)$ if $\ell >0$, and $\gamma \in (0, \overline{\gamma})$; or
\item 
$\kappa >0$, $\eta \in \left[2, 2 +\frac{2\kappa}{\theta(\kappa +\ell)}\right)$, $\alpha >\frac{(3\eta -2)\theta\ell +2\sqrt{2(\eta -2)\theta\kappa(\kappa +\ell)}}{\eta\theta +2 -2\theta}$, and $\gamma\in (\underline{\gamma}, \overline{\gamma})$; or
\item
$\kappa =0$, $\eta \in (0, 2)$, $\gamma \in \left(0, \frac{1}{\theta\ell}\right)$ if $\eta \leq 1$, and\footnote{Here, we adopt the convention that $\frac{1}{\theta\ell} =\frac{2 -\eta}{(3\eta -2)\theta\ell} =+\infty$ when $\ell =0$.} $\gamma \in \left(0, \frac{2 -\eta}{(3\eta -2)\theta\ell}\right)$ if $\eta > 1$,
\end{enumerate}
where 
\begin{align*}
\underline{\gamma} :=\frac{(\eta\theta +2 -2\theta)\alpha -(3\eta -2)\theta\ell -\sqrt{\Delta}}{4\theta\kappa(\kappa +\ell)}
\text{~and~}
\overline{\gamma} :=\frac{(\eta\theta +2 -2\theta)\alpha -(3\eta -2)\theta\ell +\sqrt{\Delta}}{4\theta\kappa(\kappa +\ell)}    
\end{align*}
with $\Delta :=\left((\eta\theta +2 -2\theta)\alpha -(3\eta -2)\theta\ell\right)^2 -8(\eta -2)\theta\kappa(\kappa +\ell)$. Then the following hold:
\begin{enumerate}
\item\label{t:subcvg_bounded} 
The sequence $(x_n, y_n, z_n, y^*_{n-1})_{n\in \mathbb{N}^\ast}$ is bounded and
\begin{align*}
\sum_{n=1}^{+\infty} \|(x_{n+1}, y_{n+1}, z_{n+1}) -(x_n, y_n, z_n)\|^2 <+\infty.
\end{align*}
Consequently, $(x_{n+1}, y_{n+1}, z_{n+1}) -(x_n, y_n, z_n)\to 0$ as $n\to +\infty$.

\item\label{t:subcvg_cluster}
For every cluster point $(\overline{x}, \overline{y}, \overline{z})$ of $(x_n, y_n, z_n)_{n\in \mathbb{N}^\ast}$, it holds that $\overline{x} =\overline{y}$,
\begin{align*}
\lim_{n\to +\infty} L_n =\lim_{n\to +\infty} F(y_n) =F(\overline{y}), \text{~~and~~} 0\in \partial (f +g +\bar{h})(\overline{y}) -\partial \ubar{h}(\overline{y}).
\end{align*} 
\end{enumerate}
\end{theorem}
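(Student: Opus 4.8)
The plan is the familiar ``Lyapunov descent plus coercivity'' scheme built around the auxiliary function $L_n$ of \eqref{eq:Lyapunov}. First I would note that each of the parameter regimes (a), (b), (c) is designed, through the case analysis in Remark~\ref{r:gamma}, to guarantee simultaneously that $\varphi(\gamma) < 0$ and that $\gamma < \frac{1}{\theta(\kappa + \ell)}$; in case (c) this also uses the stated convention and the elementary inequality $\frac{1}{\theta\ell} \le \frac{2 - \eta}{(3\eta - 2)\theta\ell}$ valid for $\eta \le 1$. Combined with the standing condition $(\eta - 1)\ell \ge 0$, Lemma~\ref{l:Lfun}\ref{l:Lfun_decrease} gives that $(L_n)_{n \in \mathbb{N}^\ast}$ is nonincreasing, while $\gamma < \frac{1}{\theta(\kappa + \ell)}$ makes the coefficient $\frac{1}{\theta\gamma} - \kappa - \ell$ in Lemma~\ref{l:Lfun}\ref{l:Lfun_lb} strictly positive, hence $L_n \ge F(y_n)$ for every $n$. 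Since $F = f + g + h$ is proper (as $\dom F = \dom g \ne \varnothing$), lower semicontinuous, coercive, and $\mathcal{H}$ is finite dimensional, $\mu := \inf F > -\infty$, so $(L_n)$ is bounded below by $\mu$ and thus converges to some $L^\ast \ge \mu$.

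For part (i) I would argue as follows. From $F(y_n) \le L_n \le L_1$ and coercivity of $F$, the sequence $(y_n)$ is bounded; plugging this back into $\frac12\big(\frac{1}{\theta\gamma} - \kappa - \ell\big)\|y_n - x_n\|^2 \le L_n - F(y_n) \le L_1 - \mu$ shows $(y_n - x_n)$, hence $(x_n)$, is bounded; Lemma~\ref{l:fdiff}\ref{l:fdiff_z} then writes $z_{n-1} = x_n + \gamma \nabla f(x_n)$ and continuity of $\nabla f$ bounds $(z_n)$; and $(y^\ast_{n-1})$ is bounded because $\ubar{h}$ is continuous convex and $(y_{n-1})$ is bounded (boundedness of subgradients on bounded sets, \cite[Proposition~16.17]{BC17}, exactly as in the proof of Lemma~\ref{l:OptCond}\ref{l:OptCond_cluster}). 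For the square-summability, telescoping Lemma~\ref{l:Lfun}\ref{l:Lfun_decrease} yields $\frac{-\varphi(\gamma)}{2\eta\theta\gamma}\sum_{n=1}^{N}\|x_{n+1} - x_n\|^2 \le L_1 - L_{N+1} \le L_1 - \mu$, so $\sum_n\|x_{n+1} - x_n\|^2 < +\infty$; then Lemma~\ref{l:fdiff}\ref{l:fdiff_zz} gives $\sum_n\|z_{n+1} - z_n\|^2 < +\infty$ and Lemma~\ref{l:fdiff}\ref{l:fdiff_yy} together with $(a + b)^2 \le 2a^2 + 2b^2$ gives $\sum_n\|y_{n+1} - y_n\|^2 < +\infty$. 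Adding the three sums proves the claim and forces $(x_{n+1}, y_{n+1}, z_{n+1}) - (x_n, y_n, z_n) \to 0$.

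For part (ii), let $(x_{k_n}, y_{k_n}, z_{k_n}) \to (\overline x, \overline y, \overline z)$ along a subsequence. Because $(y_{n+1}, z_{n+1}) - (y_n, z_n) \to 0$ by part (i), Lemma~\ref{l:OptCond}\ref{l:OptCond_cluster} applies directly and gives $f(x_{k_n}) \to f(\overline x)$, $g(y_{k_n}) \to g(\overline y)$, $\overline x = \overline y$, and $0 \in \partial f(\overline y) + \partial g(\overline y) + \nabla \bar h(\overline y) - \partial \ubar{h}(\overline y)$; since the summands $f$ and $\bar h$ are $C^1$ (Lipschitz gradients), the limiting subdifferential sum rule collapses the first three terms into $\partial(f + g + \bar h)(\overline y)$, yielding the stated stationarity inclusion. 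For the value statements, $\|y_n - x_n\| = \frac{1}{\eta}\|z_n - z_{n-1}\| \to 0$ and $\langle y^\ast_n - y^\ast_{n-1}, y_n - y_{n-1}\rangle \to 0$ (a bounded sequence against a null sequence); inserting both into the two-sided estimates of Lemma~\ref{l:Lfun}\ref{l:Lfun_lb}--\ref{l:Lfun_ub} squeezes $L_n - F(y_n) \to 0$, so $F(y_n) \to L^\ast$. On the other hand, continuity of $f, \bar h, \ubar{h}$ together with $g(y_{k_n}) \to g(\overline y)$ gives $F(y_{k_n}) \to F(\overline y)$, hence $L^\ast = F(\overline y)$, and therefore $\lim_n L_n = \lim_n F(y_n) = F(\overline y)$.

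The step I expect to be the main obstacle is the boundedness argument in part (i), since it rests on using the two estimates for $L_n$ in tandem: the upper estimate (via monotonicity of $(L_n)$) keeps $F(y_n)$, and hence $(y_n)$, bounded, while the lower estimate converts that into a bound on $\|y_n - x_n\|$ — but only because its coefficient $\frac{1}{\theta\gamma} - \kappa - \ell$ is positive, which is precisely what the hidden constraint $\gamma < \frac{1}{\theta(\kappa + \ell)}$ (extracted from cases (a)--(c) via Remark~\ref{r:gamma}) guarantees. A secondary point of care is purely bookkeeping: verifying that the stated $\gamma$-intervals in all three cases genuinely enforce $\varphi(\gamma) < 0$, for which Remark~\ref{r:gamma} does the work. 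Everything after boundedness is standard telescoping and continuity.
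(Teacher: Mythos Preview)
Your proposal is correct and follows essentially the same route as the paper: both extract $\varphi(\gamma)<0$ and $\gamma<\tfrac{1}{\theta(\kappa+\ell)}$ from the parameter cases via Remark~\ref{r:gamma}, use Lemma~\ref{l:Lfun}\ref{l:Lfun_decrease}\&\ref{l:Lfun_lb} plus coercivity of $F$ to get boundedness and square-summability (propagated through Lemma~\ref{l:fdiff}), and then invoke Lemma~\ref{l:OptCond}\ref{l:OptCond_cluster} together with the sandwich Lemma~\ref{l:Lfun}\ref{l:Lfun_lb}--\ref{l:Lfun_ub} for part~\ref{t:subcvg_cluster}. The only cosmetic difference is that the paper computes $L_{k_n}\to F(\overline y)$ directly from the explicit formula \eqref{eq:Lyapunov} before applying the sandwich, whereas you squeeze $L_n-F(y_n)\to 0$ first and then identify the limit via $F(y_{k_n})\to F(\overline y)$; the two orderings are equivalent.
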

\begin{proof}
We first have from Remark~\ref{r:gamma} that $\varphi(\gamma) <0$ and $(\kappa +\ell)\gamma <\frac{1}{\theta}$. By Lemma~\ref{l:Lfun}\ref{l:Lfun_decrease}\&\ref{l:Lfun_lb},
\begin{align}\label{eq:Ln}
\text{the sequence $(L_n)_{n\in \mathbb{N}^\ast}$ is nonincreasing}
\end{align}
and, for all $n\in \mathbb{N}^\ast$,
\begin{align}\label{eq:bounded}
L_1\geq L_n \geq F(y_n) +\frac{1}{2}\left(\frac{1}{\theta\gamma} -\kappa -\ell\right)\|y_n -x_n\|^2.
\end{align}
Since $F$ is proper, lower semicontinuous, and coercive, it is bounded below due to \cite[Theorem~1.9]{RW98}. Moreover, $\frac{1}{\theta\gamma} -\kappa -\ell >0$ because $(\kappa +\ell)\gamma <\frac{1}{\theta}$. In view of \eqref{eq:bounded}, $(F(y_n))_{n\in \mathbb{N}^\ast}$ and $(\|y_n -x_n\|)_{n\in \mathbb{N}^\ast}$ are bounded. Combining with the coercivity of $F$,  this implies the boundedness of $(y_n)_{n\in \mathbb{N}^\ast}$ and then of $(x_n)_{n\in \mathbb{N}^\ast}$. Next, the boundedness of $(z_n)_{n\in \mathbb{N}^\ast}$ follows from the continuity of $\nabla f$ and the fact that $z_n =x_{n+1} +\gamma \nabla f(x_{n+1})$. Since $y^*_{n-1}\in \partial \ubar{h}(y_{n-1})$ and $\ubar{h}$ is continuous and convex, we use \cite[Proposition~16.20]{BC17} to get the boundedness of $(y^*_{n-1})_{n\in \mathbb{N}^\ast}$.

Since both terms on the right-hand side of \eqref{eq:bounded} are bounded below, so is $(L_n)_{n\in \mathbb{N}^\ast}$. Together with \eqref{eq:Ln}, we obtain that $(L_n)_{n\in \mathbb{N}^\ast}$ converges to some $\overline{L} \in \mathbb{R}$. Now, by again using Lemma~\ref{l:Lfun}\ref{l:Lfun_decrease} and telescoping sum,
\begin{align*}
-\frac{\varphi(\gamma)}{2\eta\theta\gamma}\sum_{n=1}^{+\infty} \|x_{n+1} -x_n\|^2 \leq \sum_{n=1}^{+\infty} (L_n -L_{n+1}) =L_1 -\overline{L} <+\infty, 
\end{align*}    
and thus, $\sum_{n=1}^{+\infty} \|x_{n+1} -x_n\|^2 <+\infty$. This combined with Lemma~\ref{l:fdiff}\ref{l:fdiff_zz}\&\ref{l:fdiff_yy} yields
\begin{align*}
\sum_{n=1}^{+\infty} \|(x_{n+1}, y_{n+1}, z_{n+1}) -(x_n, y_n, z_n)\|^2 <+\infty,
\end{align*}
and so $(x_{n+1}, y_{n+1}, z_{n+1}) -(x_n, y_n, z_n)\to 0$ as $n\to +\infty$. 

Let $(\overline{x}, \overline{y}, \overline{z})$ be a cluster point of $(x_n, y_n, z_n)_{n\in \mathbb{N}^\ast}$. Then there exists a subsequence $(x_{k_n}, y_{k_n}, z_{k_n})_{n\in \mathbb{N}}$ converging to $(\overline{x}, \overline{y}, \overline{z})$. We derive from Lemma~\ref{l:OptCond}\ref{l:OptCond_cluster} that $\overline{x} =\overline{y}$, 
\begin{align*}
0\in \nabla f(\overline{y}) +\partial g(\overline{y}) +\nabla \bar{h}(\overline{y}) -\partial \ubar{h}(\overline{y}) =\partial(f +g +\bar{h})(\overline{y}) -\partial \ubar{h}(\overline{y}),    
\end{align*}
and $g(y_{k_n})\to g(\overline{y})$ as $n\to +\infty$. Note that
\begin{align*}
L_{k_n} &= f(x_{k_n}) +g(y_{k_n}) +\bar{h}(x_{k_n}) +\langle \nabla \bar{h}(x_{k_n}), y_{k_n} -x_{k_n}\rangle -\ubar{h}(y_{k_n-1})  \notag \\
&\quad -\langle y^*_{k_n-1}, y_{k_n} -y_{k_n-1}\rangle +\frac{1}{\gamma}\langle z_{k_n} -x_{k_n}, y_{k_n} -x_{k_n}\rangle -\frac{2\eta\theta -1}{2\theta\gamma}\|y_{k_n} -x_{k_n}\|^2.    
\end{align*}
Using the boundedness of $(y^*_{n-1})_{n\in \mathbb{N}^\ast}$ and the continuity of $f$, $\bar{h}$, $\nabla \bar{h}$, and $\ubar{h}$, we have $L_{k_n}\to f(\overline{y}) +g(\overline{y}) +\bar{h}(\overline{y}) -\ubar{h}(\overline{y}) =F(\overline{y})$ as $n\to +\infty$. By the convergence of the whole sequence $(L_n)_{n\in \mathbb{N}^\ast}$, it follows that
\begin{align*}
\lim_{n\to +\infty} L_n =F(\overline{y}).    
\end{align*}
Finally, since $(y^*_{n-1})_{n\in \mathbb{N}^\ast}$ is bounded and $y_n -x_n =\frac{1}{\eta}(z_n -z_{n-1})\to 0$, $y_n -y_{n-1}\to 0$ as $n\to +\infty$, we deduce from Lemma~\ref{l:Lfun}\ref{l:Lfun_lb}\&\ref{l:Lfun_ub} that $\lim_{n\to +\infty} F(y_n) =\lim_{n\to +\infty} L_n$, completing the proof.
\end{proof}

\begin{remark}[Larger range of parameter $\gamma$]
Regarding the condition for $\gamma$ in Theorem~\ref{t:subcvg}, we consider the following cases.
\begin{enumerate}
\item
$\eta =1$. Then the condition for $\gamma$ becomes 
\begin{align*}
\gamma <\overline{\gamma} =\frac{(2 -\theta)\alpha -\theta\ell +\sqrt{\left((2 -\theta)\alpha -\theta\ell\right)^2 +8\theta\kappa(\kappa +\ell)}}{4\theta\kappa(\kappa +\ell)}, \end{align*}
which is equivalent to $\varphi(\gamma) =2\theta\kappa(\kappa +\ell)\gamma^2 -\left((2 -\theta)\alpha -\theta\ell\right)\gamma -1 <0$ (see Remark~\ref{r:gamma}).  
If additionally $\theta =1$, then the condition reduces to
\begin{align*}
\gamma <\frac{\alpha -\ell +\sqrt{(\alpha -\ell)^2 +8\kappa(\kappa +\ell)}}{4\kappa(\kappa +\ell)},     
\end{align*}
or equivalently,
\begin{align}\label{eq:gamma for DY}
\varphi(\gamma) =2\kappa(\kappa +\ell)\gamma^2 -\left(\alpha -\ell\right)\gamma -1 <0.
\end{align}
This considerably improves the convergence result for the Davis--Yin splitting (Algorithm~\ref{algo:DRFDR} with $\ubar{h} \equiv 0$, $\theta =1$, and $\eta =1$) analyzed in \cite{Bian2021}. Indeed, the latter requires
\begin{align*}
\frac{1}{2}\left(\frac{1}{\gamma} +\alpha\right) -\ell -\left(\frac{1}{\gamma} +\frac{\ell}{2}\right)\left((-1 -2\alpha\gamma) +(1 +\kappa\gamma)^2\right) >0,
\end{align*}
which can be written as
\begin{align}\label{eq:gamma for DY'}
\varphi(\gamma) + (\kappa\gamma -1)^2\ell\gamma +2(\kappa -\alpha)\ell\gamma^2 +4(\kappa -\alpha)\gamma <0.
\end{align}
Since $\alpha \leq \kappa$, \eqref{eq:gamma for DY'} implies \eqref{eq:gamma for DY} and moreover, by Remark~\ref{r:gamma}, $\gamma <\frac{1}{\kappa +\ell} \leq \frac{1}{\kappa}$, which yields $(\kappa\gamma -1)^2 >0$. Therefore, \eqref{eq:gamma for DY'} even strictly more restrictive than \eqref{eq:gamma for DY} once $\ell >0$ or $\alpha <\kappa$. 
 
Let us now consider the case when $\bar{h} \equiv 0$ (which yields $\ell =0$), $\theta =1$, and $\eta =1$. Then the condition for $\gamma$ becomes
\begin{align}\label{eq:gamma for DR}
\gamma <\frac{\alpha +\sqrt{\alpha^2 +8\kappa^2}}{4\kappa^2}, \text{~or equivalently,~} 2\kappa^2\gamma^2 -\alpha\gamma -1 <0.
\end{align}
If additionally $\ubar{h} \equiv 0$, then Algorithm~\ref{algo:DRFDR} reduces to the Douglas--Rachford splitting whose convergence is established in \cite{LP16} under the setting of Theorem~\ref{t:subcvg} except that 
\begin{align*}
(1 +\kappa\gamma)^2 -\frac{5\alpha\gamma}{2} -\frac{3}{2} <0, \text{~or equivalently,~} (2\kappa^2\gamma^2 -\alpha\gamma -1) +4(\kappa -\alpha)\gamma <0,
\end{align*}
which is more restrictive than \eqref{eq:gamma for DR} since $\alpha \leq \kappa$.

\item 
$\eta =2$. Then we require that $\alpha >2\theta\ell$ and
\begin{align*}
\gamma <\frac{\alpha -2\theta\ell}{\theta\kappa(\kappa +\ell)}.    
\end{align*}
If, in addition, $\bar{h} \equiv 0$ (which yields $\ell =0$), these requirements reduce to $\alpha >0$ ($f$ is strongly convex) and $\gamma <\frac{\alpha}{\theta\kappa^2}$. This significantly improves the result for the Peaceman--Rachford splitting (Algorithm~\ref{algo:DRFDR} with $\bar{h} \equiv 0$, $\ubar{h} \equiv 0$, $\theta =1$, and $\eta =2$) in \cite{LLP17} which requires $\alpha >\frac{3}{2}\kappa$ and $\gamma <\frac{3\alpha -2\kappa}{\kappa^2}$. 
\end{enumerate}
\end{remark}

Now, we recall that the \emph{conjugate} of a proper function $p$ is $p^*\colon \mathcal{H}\to (-\infty, +\infty]$ given by
\begin{align*}
p^*(u) =\sup_{x\in \mathcal{H}} (\langle u, x\rangle -p(x)).
\end{align*}
According to, e.g., \cite[Proposition~16.10]{BC17}, for all $y\in \mathcal{H}$ and all $y^*\in \partial \ubar{h}(y)$,
\begin{align}\label{eq:conjugate}
-\ubar{h}(y) =\ubar{h}^*(y^*) -\langle y^*, y\rangle \text{~~and~~} y\in \partial \ubar{h}^*(y^*).
\end{align}
The former together with \eqref{eq:Lyapunov} implies that, for all $n\in \mathbb{N}^\ast$,
\begin{align*}
L_n &= f(x_n) +g(y_n) +\bar{h}(x_n) +\langle \nabla \bar{h}(x_n), y_n -x_n\rangle +\ubar{h}^*(y^*_{n-1}) -\langle y^*_{n-1}, y_n\rangle \notag \\
&\hspace{4.5cm} +\frac{1}{\gamma}\langle z_n -x_n, y_n -x_n\rangle -\frac{2\eta\theta -1}{2\theta\gamma}\|y_n -x_n\|^2 \\
&= \mathcal{L}(x_n, y_n, z_n, \nabla \bar{h}(x_n), y^*_{n-1}),
\end{align*}
where $\mathcal{L}\colon \mathcal{H}^5\to \left(-\infty, +\infty\right]$ defined by
\begin{align*}
\mathcal{L}(x, y, z, u, v) &=f(x) +g(y) +\bar{h}(x) +\langle u, y -x\rangle +\ubar{h}^*(v) -\langle v, y\rangle \notag \\
&\hspace{2cm} +\frac{1}{\gamma}\langle z -x, y -x\rangle -\frac{2\eta\theta -1}{2\theta\gamma}\|y -x\|^2.
\end{align*}

\begin{theorem}[Full sequential convergence]
\label{t:fullcvg}
Suppose that the assumptions of Theorem~\ref{t:subcvg} hold and that $\mathcal{L}$ is a KL function. Then
\begin{align*}
\sum_{n=1}^{+\infty} \|(x_{n+1}, y_{n+1}, z_{n+1}) -(x_n, y_n, z_n)\| <+\infty.
\end{align*}
and the sequence $(x_n, y_n, z_n)_{n\in \mathbb{N}^\ast}$ converges to $(\tilde{x}, \tilde{y}, \tilde{z})$ with $\tilde{x} =\tilde{y}$ and
\begin{align*}
0\in \partial (f +g +\bar{h})(\tilde{y}) -\partial \ubar{h}(\tilde{y}).
\end{align*}
Moreover, if $\mathcal{L}$ is a KL function with exponent $\lambda\in [0, 1)$, then exactly one of the following alternatives holds: 
\begin{enumerate}
\item\label{t:fullcvg_finite}
\emph{(Finite convergence)} $\lambda =0$ and there exists $n_0\in \mathbb{N}$ such that, for all $n\geq n_0$, $(x_n, y_n, z_n) =(\tilde{x}, \tilde{y}, \tilde{z})$.
\item\label{t:fullcvg_linear}
\emph{(Linear convergence)} $\lambda \leq \frac{1}{2}$ and there exist $\Gamma \in (0, +\infty)$ and $\rho \in (0, 1)$ such that, for all $n\in \mathbb{N}^\ast$, $\|(x_n, y_n, z_n) -(\tilde{x}, \tilde{y}, \tilde{z})\|\leq \Gamma\rho^\frac{n}{2}$ and $|F(y_n) -F(\tilde{y})|\leq \Gamma\rho^\frac{n}{2}$.
\item\label{t:fullcvg_sublinear}
\emph{(Sublinear convergence)} $\lambda >\frac{1}{2}$ and there exists $\Gamma \in (0, +\infty)$ such that, for all $n\in \mathbb{N}^\ast$, $\|(x_n, y_n, z_n) -(\tilde{x}, \tilde{y}, \tilde{z})\|\leq \Gamma n^{-\frac{1-\lambda}{2\lambda-1}}$ and $|F(y_n) -F(\tilde{y})|\leq \Gamma n^{-\frac{1-\lambda}{2\lambda-1}}$.
\end{enumerate}
\end{theorem}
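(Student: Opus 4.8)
The plan is to run the standard Kurdyka--\L ojasiewicz (KL) descent scheme with the merit function $\mathcal{L}$, using the identity $L_n = \mathcal{L}(w_n)$ where $w_n := (x_n, y_n, z_n, \nabla\bar{h}(x_n), y^*_{n-1})$. Writing $s_n := \|x_{n+1} - x_n\|$, I need three ingredients: (i) a sufficient decrease $L_n - L_{n+1} \geq -\tfrac{\varphi(\gamma)}{2\eta\theta\gamma}\, s_n^2$, which is exactly Lemma~\ref{l:Lfun}\ref{l:Lfun_decrease} and comes with $-\varphi(\gamma)/(2\eta\theta\gamma) > 0$ under the hypotheses of Theorem~\ref{t:subcvg}; (ii) a relative-error bound $\dist(0, \partial\mathcal{L}(w_n)) \leq b(s_n + s_{n-1})$ for some $b>0$; and (iii) the ``continuity along the sequence'' fact that $\mathcal{L}$ equals the common limit $\overline{L} := \lim_n L_n$ on the cluster set of $(w_n)$. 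The convergence of $(L_n)$, the boundedness of $(w_n)$, and the vanishing of the increments of $(x_n,y_n,z_n)$ are already furnished by Theorem~\ref{t:subcvg}.

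For (ii), I would use that $\mathcal{L}$ is $C^1$ plus the separable nonsmooth terms $f+\bar{h}$ in $x$, $g$ in $y$, and $\ubar{h}^*$ in $v$, so its limiting subdifferential is the product of the partial (sub)gradients. Taking $\xi_n \in \partial g(y_n)$ as supplied by Lemma~\ref{l:OptCond}\ref{l:OptCond_g}, together with $\nabla f(x_n) = \tfrac{1}{\gamma}(z_{n-1} - x_n)$ from Lemma~\ref{l:fdiff}\ref{l:fdiff_z} and $y_{n-1} \in \partial\ubar{h}^*(y^*_{n-1})$ from \eqref{eq:conjugate}, each of the five components of a suitable element of $\partial\mathcal{L}(w_n)$ collapses (the cancellations being forced precisely by the optimality conditions and by the choice $u_n = \nabla\bar{h}(x_n)$) to a linear combination of $z_n - z_{n-1}$, $y_n - x_n$, and $y_n - y_{n-1}$, all of which Lemma~\ref{l:fdiff}\ref{l:fdiff_zz}\&\ref{l:fdiff_yy} bound by a constant times $s_n + s_{n-1}$. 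For (iii), every cluster point $\overline{w}$ of $(w_n)$ has the form $(\overline{y}, \overline{y}, \overline{z}, \nabla\bar{h}(\overline{y}), \overline{v})$ with $\overline{v} \in \partial\ubar{h}(\overline{y})$ (using the vanishing increments, the boundedness of $(y^*_{n-1})$, continuity of $\nabla\bar{h}$, and closedness of $\gra\partial\ubar{h}$), and evaluating the formula for $\mathcal{L}$ at such a point gives $\mathcal{L}(\overline{w}) = F(\overline{y})$ by the conjugacy identity \eqref{eq:conjugate}; since $F(\overline{y}) = \overline{L}$ by Theorem~\ref{t:subcvg}\ref{t:subcvg_cluster}, we get $\mathcal{L} \equiv \overline{L}$ on the cluster set, which moreover lies in $\dom\partial\mathcal{L}$ by letting $n\to\infty$ in (ii) along a subsequence.

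With these in hand I would invoke the uniformized KL property (the cluster set of $(w_n)$ being compact with $\mathcal{L}$ constant on it) to obtain a single triple $(\varepsilon, \delta, \varphi)$; note that connectedness of the cluster set is not required. For $n$ large, $\dist(w_n, \text{cluster set}) < \varepsilon$ by boundedness, and either $L_n = \overline{L}$ for some $n$ --- in which case (i) forces $s_m = 0$ for all $m \geq n$ and we are in the finite-termination case --- or $\overline{L} < L_n < \overline{L} + \delta$, so the KL inequality applies at $w_n$. Combining it with (i), (ii), and the concavity of $\varphi$ in the usual way yields $\varphi(L_n - \overline{L}) - \varphi(L_{n+1} - \overline{L}) \geq \mathrm{const}\cdot s_n^2/(s_n + s_{n-1})$; an elementary inequality of the form $2\sqrt{ab} \le \epsilon a + \epsilon^{-1} b$ together with telescoping then gives $\sum_n s_n < \infty$. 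Lemma~\ref{l:fdiff}\ref{l:fdiff_zz}\&\ref{l:fdiff_yy} upgrade this to $\sum_n \|(x_{n+1},y_{n+1},z_{n+1}) - (x_n,y_n,z_n)\| < \infty$, so $(x_n,y_n,z_n)$ is Cauchy and converges, with the limit stationary by Theorem~\ref{t:subcvg}\ref{t:subcvg_cluster}.

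For the rates I would take $\varphi(t) = ct^{1-\lambda}$ and set $r_n := \sum_{k \geq n} s_k$, which dominates $\|(x_n,y_n,z_n) - (\tilde{x},\tilde{y},\tilde{z})\|$ and, via Lemma~\ref{l:Lfun}\ref{l:Lfun_lb}\&\ref{l:Lfun_ub} together with the boundedness of $(y^*_{n-1})$ and \eqref{eq:conjugate}, also controls $|F(y_n) - F(\tilde{y})|$ up to a constant times $s_n + s_{n-1} = r_{n-1} - r_{n+1}$. The KL inequality and (ii) give $(L_n - \overline{L})^{\lambda} \leq \mathrm{const}\cdot(r_{n-1} - r_{n+1})$, which feeds a recursion of the form $r_n \leq \tfrac12(r_{n-1} - r_n) + \mathrm{const}\cdot(r_{n-1} - r_{n+1})^{(1-\lambda)/\lambda}$; the three cases $\lambda = 0$, $\lambda \in (0,\tfrac12]$, and $\lambda \in (\tfrac12,1)$ then yield finite termination, geometric decay $r_n = O(\rho^{n/2})$ with $\rho \in (0,1)$, and polynomial decay $r_n = O(n^{-(1-\lambda)/(2\lambda-1)})$ respectively, by the standard recursion lemmas, and a shift of index transfers each rate to the objective gap. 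The main obstacle throughout is (ii) in the presence of a \emph{nonsmooth} concave part: the increments of $(y^*_{n-1})$ cannot be controlled, which is exactly why the merit function is built with $\ubar{h}^*$ (so that the $v$-component of $\partial\mathcal{L}$ is handled for free through $y_{n-1} \in \partial\ubar{h}^*(y^*_{n-1})$) and why one must rely on the uniformized KL property, which needs only $\dist(w_n, \text{cluster set}) \to 0$ rather than convergence of $(w_n)$.
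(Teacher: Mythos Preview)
Your proposal is correct and follows essentially the same route as the paper: the same merit function $\mathcal{L}$ evaluated at $w_n=(x_n,y_n,z_n,\nabla\bar h(x_n),y^*_{n-1})$, the same sufficient-decrease/relative-error/continuity triad (with the $v$-component of $\partial\mathcal{L}(w_n)$ handled via $y_{n-1}\in\partial\ubar h^*(y^*_{n-1})$), and the same passage from $\sum_n\|x_{n+1}-x_n\|<\infty$ to the full triple via Lemma~\ref{l:fdiff}. The only difference is packaging: the paper invokes \cite[Theorem~5.1]{BDL22} (and \cite[Theorem~2]{Attouch2007} for the rates) as a black box in place of your explicit uniformized-KL telescoping and tail-recursion analysis.
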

\begin{proof}
For each $n\in \mathbb{N}^\ast$, set $w_n =(x_n, y_n, z_n, \nabla \bar{h}(x_n), y^*_{n-1})$. Let $n\in \mathbb{N}^\ast$. We derive from Lemma~\ref{l:Lfun}\ref{l:Lfun_decrease} that
\begin{align*}
\mathcal{L}(w_{n+1}) -\frac{\varphi(\gamma)}{2\eta\theta\gamma}\|x_{n+1} -x_n\|^2 \leq \mathcal{L}(w_n),
\end{align*}
and from Theorem~\ref{t:subcvg}\ref{t:subcvg_bounded} that $(x_n, y_n, z_n, y^*_{n-1})_{n\in \mathbb{N}^\ast}$ is bounded, and so is $(w_n)_{n\in \mathbb{N}^\ast}$ due to the continuity of $\nabla h$. For every cluster point $\overline{w}$ of $(w_n)_{n\in \mathbb{N}^\ast}$, one has $\overline{w} =(\overline{x}, \overline{y}, \overline{z}, \nabla \bar{h}(\overline{x}), \overline{y}^*)$ with $\overline{y}^*\in \partial \ubar{h}(\overline{y})$ and, by Theorem~\ref{t:subcvg}\ref{t:subcvg_cluster} along with the equality in \eqref{eq:conjugate}, $\overline{x} =\overline{y}$ and
\begin{align*}
\mathcal{L}(w_n) =L_n \to F(\overline{y}) =\mathcal{L}(\overline{w}) \text{~~as~~} n\to +\infty.    
\end{align*}

Next, we see that 
\begin{align*}
\partial \mathcal{L}(w_n) 
=\begin{bmatrix}
\nabla f(x_n) +\nabla \bar{h}(x_n) -\nabla \bar{h}(x_n) -\frac{1}{\gamma}(z_n -x_n) -\frac{1}{\gamma}(y_n -x_n) +\frac{2\eta\theta -1}{\theta\gamma}(y_n -x_n) \\
\partial g(y_n) +\nabla \bar{h}(x_n) -y^*_{n-1} +\frac{1}{\gamma}(z_n -x_n) -\frac{2\eta\theta -1}{\theta\gamma}(y_n -x_n) \\
\frac{1}{\gamma}(y_n -x_n) \\
y_n -x_n \\
\partial \ubar{h}^*(y^*_{n-1}) -y_n
\end{bmatrix}.
\end{align*}
On the other hand, we have from Lemma~\ref{l:fdiff}\ref{l:fdiff_z} that $\nabla f(x_n) =\frac{1}{\gamma}(z_{n-1} -x_n)$, from Lemma~\ref{l:OptCond}\ref{l:OptCond_g} that $-\frac{1}{\theta\gamma}(y_n -x_n) -\frac{1}{\gamma}(z_{n-1} -x_n)\in \partial g(y_n) +\nabla \bar{h}(x_n) -y^*_{n-1}$, from the $z$-update that $y_n -x_n =\frac{1}{\eta}(z_n -z_{n-1})$, and from \eqref{eq:conjugate} that $y_{n-1}\in \partial \ubar{h}^*(y^*_{n-1})$. This leads to
\begin{align*}
\begin{bmatrix}
\frac{\eta\theta -\theta -1}{\eta\theta\gamma}(z_n -z_{n-1}) \\
-\frac{1}{\gamma}(z_n -z_{n-1}) \\
\frac{1}{\eta\gamma}(z_n -z_{n-1}) \\
\frac{1}{\eta}(z_n -z_{n-1}) \\
-(y_n -y_{n-1})
\end{bmatrix}
\in \partial \mathcal{L}(w_n),
\end{align*}
By combining with Lemma~\ref{l:fdiff}\ref{l:fdiff_zz}\&\ref{l:fdiff_yy}, there exists $C\in (0, +\infty)$ such that, for all $n\geq 2$,
\begin{align*}
\dist(0, \partial \mathcal{L}(w_n))\leq C(\|x_{n+1} -x_n\| +\|x_n -x_{n-1}\|).
\end{align*}
We now apply \cite[Theorem~5.1]{BDL22} with $\Delta_n=\|x_{n+1}-x_n\|$, $\alpha_n \equiv -\frac{\varphi(\gamma)}{2\eta\theta\gamma}$, $I =\{0,1\}$, $\lambda_1 =\lambda_2 =1/2$, $\beta_n \equiv 1/(2C)$, and $\varepsilon_n \equiv 0$ to obtain that
\begin{align*}
\sum_{n=1}^{+\infty} \|x_{n+1}-x_{n}\| =\sum_{n=1}^{+\infty} \Delta_n < +\infty,
\end{align*}
which together with Lemma~\ref{l:fdiff}\ref{l:fdiff_zz}\&\ref{l:fdiff_yy} implies that
\begin{align*}
\sum_{n=0}^{+\infty}\|(x_{n+1},y_{n+1},z_{n+1})-(x_{n},y_{n},z_{n})\| < +\infty.
\end{align*}
Therefore, $(x_{n},y_{n},z_{n},)_{n \in \mathbb{N}}$ converges to a point $(\tilde{x},\tilde{y},\tilde{z})\in \mathcal{H}^3$. By Theorem~\ref{t:subcvg}\ref{t:subcvg_cluster}, $\tilde{x} =\tilde{y}$ and $0\in \partial (f +g +\bar{h})(\tilde{y}) -\partial \ubar{h}(\tilde{y})$.

Now, assume that $\mathcal{L}$ is a KL function with exponent $\lambda\in [0, 1)$. We distinguish the following three cases.

\emph{Case 1:} $\lambda =0$. Following the arguments in the proof of \cite[Theorem~5.1]{BDL22} and \cite[Theorem~2(i)]{Attouch2007}, we get \ref{t:fullcvg_finite}.

\emph{Case 2:} $\lambda \leq \frac{1}{2}$. In view of \cite[Theorem~5.1(iv)]{BDL22}, there exist $\Gamma_0 \in (0, +\infty)$ and $\rho \in (0, 1)$ such that, for all $n\in \mathbb{N}^\ast$,
\begin{align*}
\|x_n -\tilde{x}\| \leq \Gamma_0\rho^\frac{n}{2} \text{~~and~~} |\mathcal{L}(w_n) -\mathcal{L}(\tilde{w})|\leq \Gamma_0\rho^n.    
\end{align*}
Let $n\in \mathbb{N}^\ast$. Then
\begin{align*}
\|x_{n+1} -x_n\| \leq \|x_{n+1} -\tilde{x}\| +\|x_n -\tilde{x}\|\leq \Gamma_0\rho^\frac{n+1}{2} +\Gamma_0\rho^\frac{n}{2} <2\Gamma_0\rho^\frac{n}{2},
\end{align*}
which combined with Lemma~\ref{l:fdiff}\ref{l:fdiff_zz}\&\ref{l:fdiff_yy} implies that
\begin{align}\label{eq:zzyy}
\|z_n -z_{n-1}\| \leq 2(1 +\gamma\kappa)\Gamma_0\rho^\frac{n}{2}
\text{~~and~~}
\|y_{n+1} -y_n\| \leq 2\left(1 +\frac{2(1 +\gamma\kappa)}{\eta}\right)\Gamma_0\rho^\frac{n}{2}.
\end{align}
By the $z$-update,
\begin{align}\label{eq:ynxn}
\|y_n -x_n\| =\frac{1}{\eta}\|z_n -z_{n-1}\| \leq \frac{2(1 +\gamma\kappa)}{\eta}\Gamma_0\rho^\frac{n}{2}.    
\end{align}
As $\tilde{x} =\tilde{y}$, it follows that
\begin{align*}
\|y_n -\tilde{y}\| \leq \|x_n -\tilde{x}\| +\|y_n -x_n\| \leq \left(1 +\frac{2(1 +\gamma\kappa)}{\eta}\right)\Gamma_0\rho^\frac{n}{2}.    
\end{align*}
Recall from Lemma~\ref{l:fdiff}\ref{l:fdiff_z} that $z_{n-1} =x_n +\gamma \nabla f(x_n)$. Passing to the limit yields $\tilde{z} =\tilde{x} +\gamma \nabla f(\tilde{x})$, from which and the Lipschitz continuity of $\nabla f$ we have
\begin{align*}
\|z_{n-1} -\tilde{z}\| \leq \|x_n -\tilde{x}\| +\gamma\|\nabla f(x_n) -\nabla f(\tilde{x})\| \leq (1 +\gamma\kappa)\|x_n -\tilde{x}\| \leq (1 +\gamma\kappa)\Gamma_0\rho^\frac{n}{2}.    
\end{align*}
Therefore,
\begin{align*}
\|(x_n, y_n, z_n) -(\tilde{x}, \tilde{y}, \tilde{z})\| &\leq \|x_n -\tilde{x}\| +\|y_n -\tilde{y}\| +\|z_n -\tilde{z}\|\\
&\leq \left(3 +\gamma\kappa +\frac{2(1 +\gamma\kappa)}{\eta}\right)\Gamma_0\rho^\frac{n}{2}.    
\end{align*}
On the other hand, we derive from Lemma~\ref{l:Lfun}\ref{l:Lfun_lb}\&\ref{l:Lfun_ub} that
\begin{align*}
|L_n -F(y_n)| \leq \|y^*_n -y^*_{n-1}\| \|y_n -y_{n-1}\| +\frac{1}{2}\left(\frac{1}{\theta\gamma} +\kappa +\ell\right)\|y_n -x_n\|^2    
\end{align*}
By the boundedness of $(y^*_{n-1})_{n\in \mathbb{N}^\ast}$ (see Theorem~\ref{t:subcvg}\ref{t:subcvg_bounded}), \eqref{eq:zzyy}, and \eqref{eq:ynxn}, there exists $\Gamma_1\in (0, +\infty)$ such that, for all $n\in \mathbb{N}^\ast$, $|L_n -F(y_n)| \leq \Gamma_1\rho^\frac{n}{2}$. Noting that $\mathcal{L}(w_n) =L_n$ and $\mathcal{L}(\tilde{w}) =F(\tilde{y})$, we obtain that
\begin{align*}
|F(y_n) -F(\tilde{y})| \leq |\mathcal{L}(w_n) -\mathcal{L}(\tilde{w})| +|L_n -F(y_n)| \leq (\Gamma_0\rho^\frac{n}{2} +\Gamma_1)\rho^\frac{n}{2} \leq (\Gamma_0 +\Gamma_1)\rho^\frac{n}{2}.
\end{align*}
Letting $\Gamma=
\max\{\left(3 +\gamma\kappa +\frac{2(1 +\gamma\kappa)}{\eta}\right)\Gamma_0, \Gamma_0 +\Gamma_1\}$, we get \ref{t:fullcvg_linear}.

\emph{Case 3:} $\lambda >\frac{1}{2}$. Arguing as in the proof of \cite[Theorem~5.1(iv)]{BDL22} and \cite[Theorem~2(iii)]{Attouch2007}, there exists $\Gamma_0 \in (0, +\infty)$ such that, for all $n\in \mathbb{N}^\ast$,
\begin{align*}
\|x_n -\tilde{x}\| \leq \Gamma_0n^{-\frac{1-\lambda}{2\lambda-1}} \text{~~and~~} |\mathcal{L}(w_n) -\mathcal{L}(\tilde{w})|\leq \Gamma_0n^{-\frac{2-2\lambda}{2\lambda-1}}.  
\end{align*}
By the same steps as in \emph{Case 2} and noting that $(n+1)^{-\frac{1-\lambda}{2\lambda-1}} <n^{-\frac{1-\lambda}{2\lambda-1}}$ for $n\in \mathbb{N}^\ast$ as well as $(n-1)^{-\frac{1-\lambda}{2\lambda-1}} \leq 2^{\frac{1-\lambda}{2\lambda-1}}n^{-\frac{1-\lambda}{2\lambda-1}}$ for $n\geq 2$, we also have that, for all $n\in \mathbb{N}^\ast$,
\begin{align*}
\|(x_n, y_n, z_n) -(\tilde{x}, \tilde{y}, \tilde{z})\| \leq \left(3 +\gamma\kappa +\frac{2(1 +\gamma\kappa)}{\eta}\right)\Gamma_0 n^{-\frac{1-\lambda}{2\lambda-1}}
\end{align*}
and 
\begin{align*}
|F(y_n) -F(\tilde{y})| \leq (\Gamma_0 n^{-\frac{1-\lambda}{2\lambda-1}} +\Gamma_1)n^{-\frac{1-\lambda}{2\lambda-1}} \leq (\Gamma_0  +\Gamma_1)n^{-\frac{1-\lambda}{2\lambda-1}} \text{~~for some~} \Gamma_1\in (0, +\infty).
\end{align*}
Again letting $\Gamma=
\max\{\left(3 +\gamma\kappa +\frac{2(1 +\gamma\kappa)}{\eta}\right)\Gamma_0, \Gamma_0  +\Gamma_1\}$, we complete the proof.
\end{proof}

\begin{remark}[Comments on the assumption on $\ubar{h}$]
There are several previous studies in the literature \cite{An2016,Wen2017,Tan2023} employing the subgradient of the subtrahend part $\ubar{h}$. When it comes to proving the convergence of the whole sequence, they need an additional assumption that $\ubar{h}$ has Lipschitz continuous gradient. Inspired by the technique in \cite{TLiu2019}, we utilize the conjugate of $\ubar{h}$ to construct the Lyapunov function $\mathcal{L}$, and we do not require any additional assumptions on $\ubar{h}$ to prove the full sequential convergence, as shown in Theorem~\ref{t:fullcvg}.
\end{remark}

\section{Numerical results}
\label{sec:numerical_result}

All of the experiments are performed on a 64-bit laptop with Intel(R) Core(TM) i7-1165G7 CPU (2.80GHz) and 32GB of RAM with MATLAB R2023b.

\subsection{An analytical example}

Consider the toy example
\begin{align}\label{eq:toy}
\min_{x\in \mathbb{R}^d} \|Ax\|^2 +\rho\|x\|_1 +\frac{1}{2}e^{-\|x\|^2} -\rho\|x\|,
\end{align}
  where $A\in \mathbb{R}^{d\times d}$ and $\rho\in (0, +\infty)$. One stationary point of this problem is $y^* =0\in \mathbb{R}^d$. We will examine Algorithm~\ref{algo:DRFDR} in solving \eqref{eq:toy}. Let $f(x) =\|Ax\|^2$ ($\kappa =2\lambda_{\max}(A^\top A)$), $g(x) =\rho\|x\|_1$, $\bar{h}(x) =\frac{1}{2}e^{-\|x\|^2}$ ($\ell =e^{-2}$), and $\ubar{h}(x) =\rho\|x\|$. Here, $\lambda_{\max}(M)$ denotes the maximum eigenvalue of matrix $M$. The updating steps of the DRFDR in this case is
\begin{align*}
\begin{cases}
x_{n+1} &=(2\gamma A^\top A + I)^{-1}z_n,\\
y_{n+1} &=\prox_{\gamma\rho\|\cdot\|_1}(2x_{n+1} - z_n +2\gamma x_{n+1}e^{-\|x_{n+1}\|^2} + \gamma \rho y^*_n),\\
z_{n+1} &=z_n +\eta(y_{n+1} -x_{n+1}),
\end{cases}
\end{align*}
where $\prox_{\gamma\rho\|\cdot\|_1}$, known as the \emph{soft shrinkage} operator, is defined componentwise for all $u =(u_1, \dots, u_d)\in \mathbb{R}^d$ and all $i\in \{1, \dots, d\}$ as
\begin{align}\label{eq:proxL1}
(\prox_{\gamma\rho\|\cdot\|_1}(u))_i =\operatorname{sign}(u_i) \max\{0, \lvert u_i\rvert-\gamma\rho\}    
\end{align}
(see, e.g., \cite[Example~24.22]{BC17}), while
\begin{align}\label{eq:y*n}
y^*_n =\begin{cases}
0 & \text{if $y_n =0$},\\
\frac{y_n}{\|y_n\|} & \text{if $y_n \neq 0$}.
\end{cases}
\end{align}%

We run the DRFDR starting at the point $y_0=z_0=[10,10]^{\top}$, and the stopping condition is $\|y_{n+1}-y_n\|<10^{-3}$. Let us choose
\begin{align*}
A =\begin{bmatrix}
1 & 0 \\
0 & 0
\end{bmatrix}.
\end{align*}
For this case, $\alpha =0$. First, we fix $\gamma=0.22$ and let $\eta$ vary from $1.0$ to $1.5$. We then fix $\eta=1.5$ and let $\gamma$ vary from $0.05$ to $0.22$ (By Theorem~\ref{t:subcvg}, $\gamma <\overline{\gamma} =0.223$). From Figure~\ref{fig:eta_effect_gamma_eq_0}(a)\&(b), we observe that larger $\eta$ and $\gamma$ result in faster convergence.

\begin{figure}[H]%
\centering
\subfloat[]{{\includegraphics[width=0.45\linewidth]{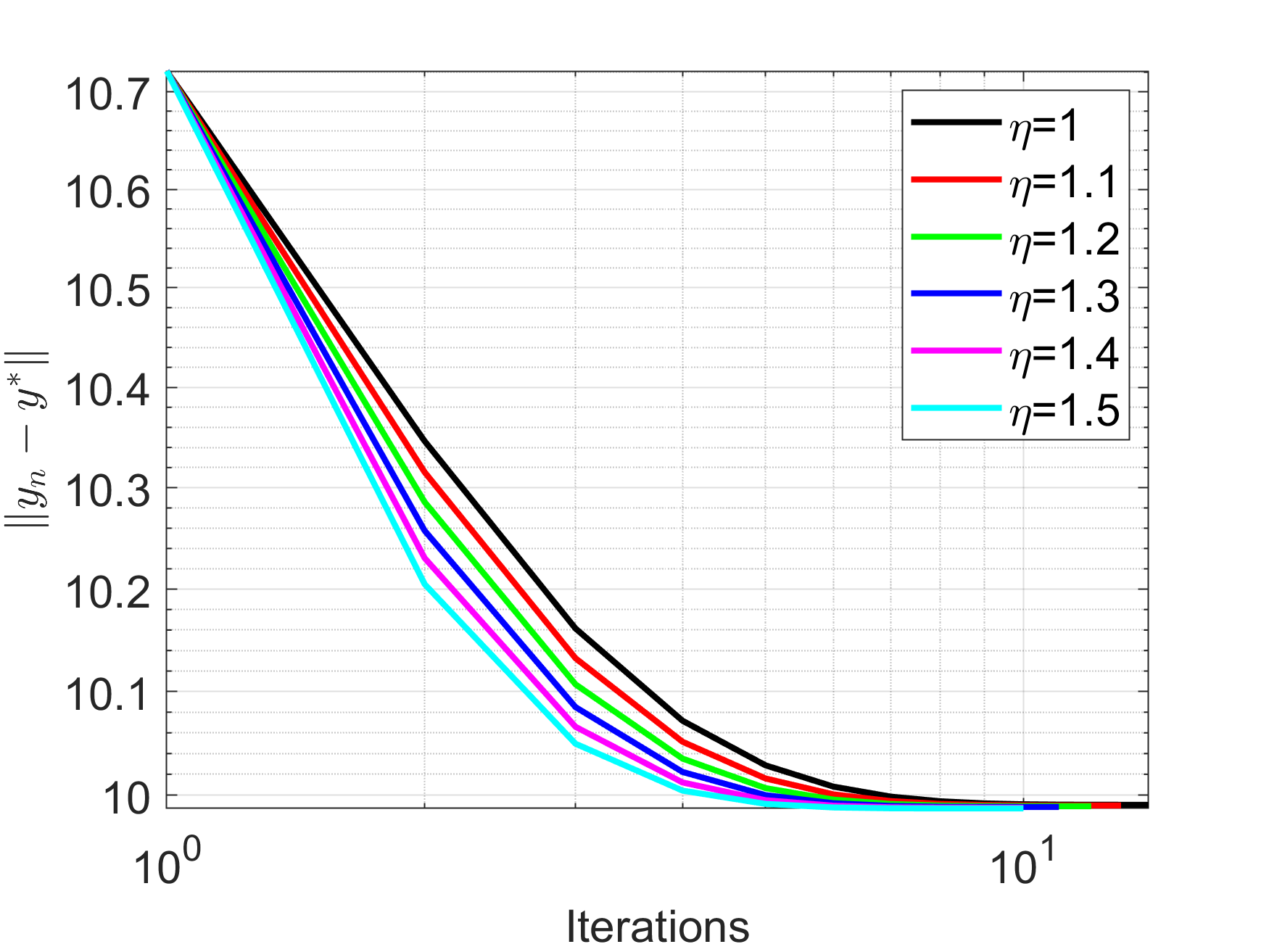} }}%
~
\subfloat[]{{\includegraphics[width=0.45\linewidth]{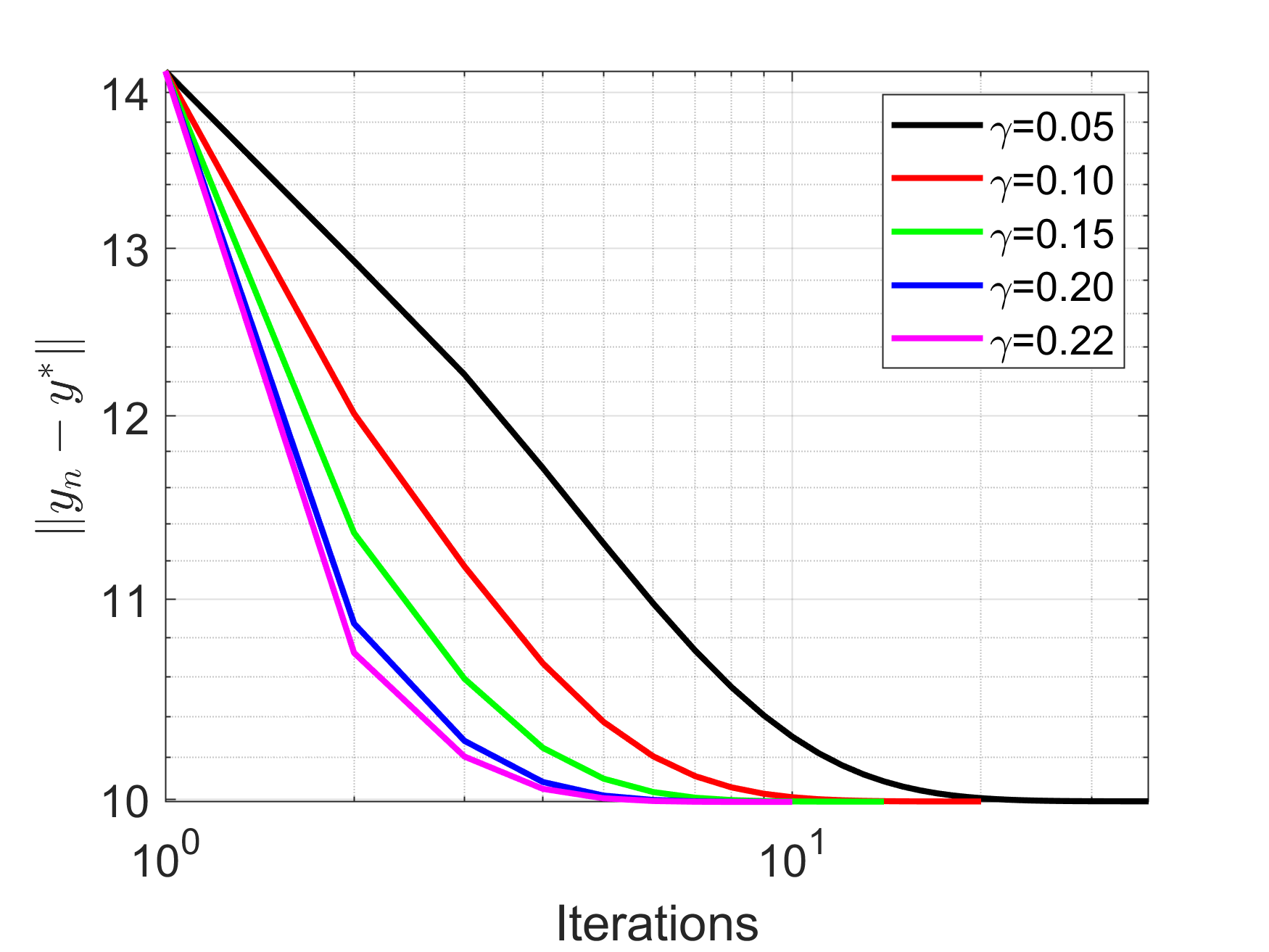} }}%
\caption{Effects of $\eta$ and $\gamma$.}%
\label{fig:eta_effect_gamma_eq_0}%
\end{figure}

Next, we choose $A \in \mathbb{R}^{2\times 2}$ to be the identity matrix. Then $\alpha =2 >0$ and we can choose $\eta \geq 2$. Let $\theta=1$, $\rho=0.1$, and $\gamma=0.3$. According to Theorem~\ref{t:subcvg}, $\eta \in [2, 3.87)$. Now, Figure~\ref{fig:eta_effect_2}(a) shows that larger $\eta$ results in faster convergence. We then fix $\eta=2$ and vary $\gamma$. Again it can be seen from Figure~\ref{fig:eta_effect_2}(b) that the DRFDR converges faster with larger $\gamma$.   

\begin{figure}[H]%
\centering
\subfloat[]{{\includegraphics[width=0.45\linewidth]{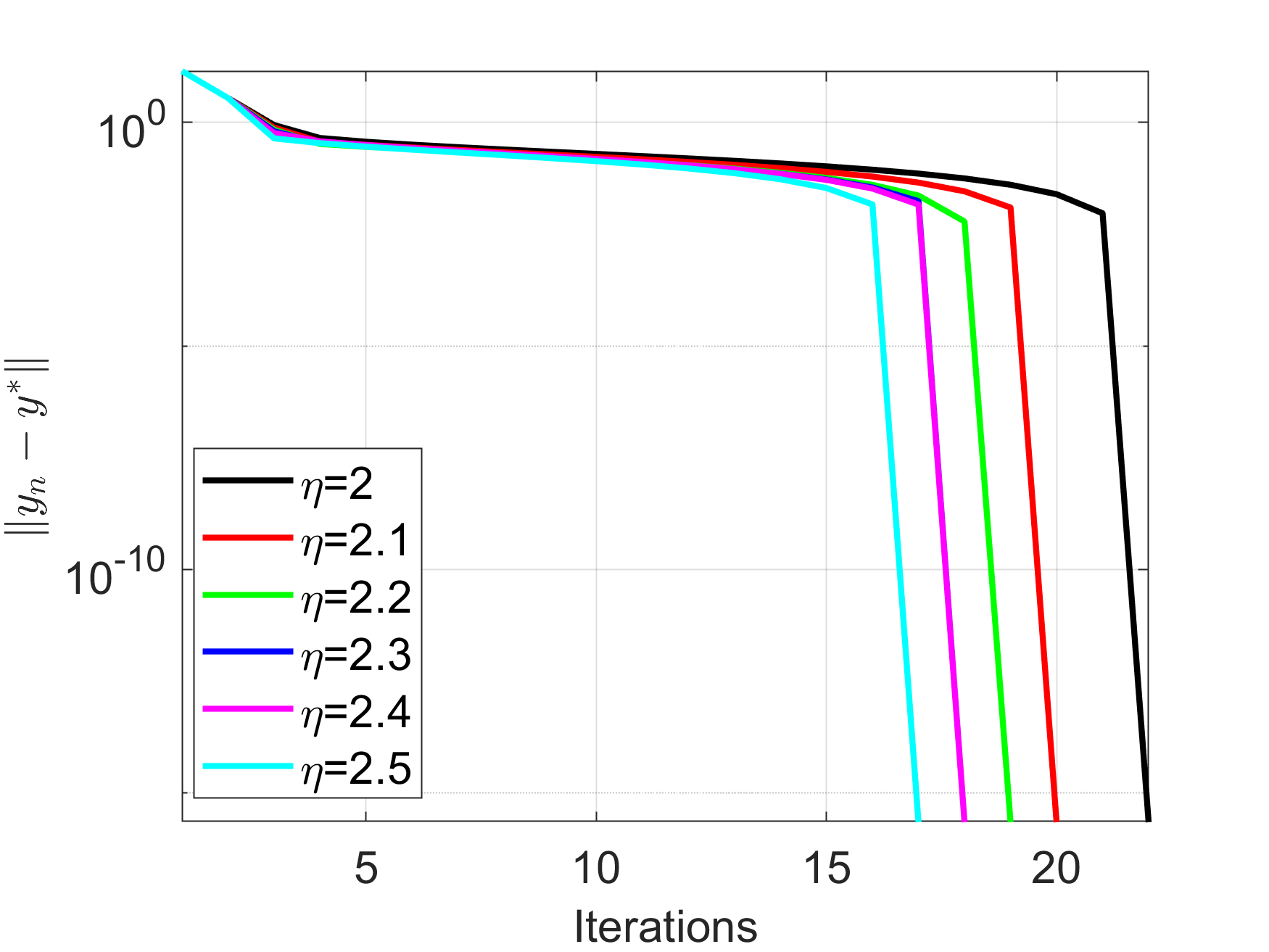} }}%
~
\subfloat[]{{\includegraphics[width=0.45\linewidth]{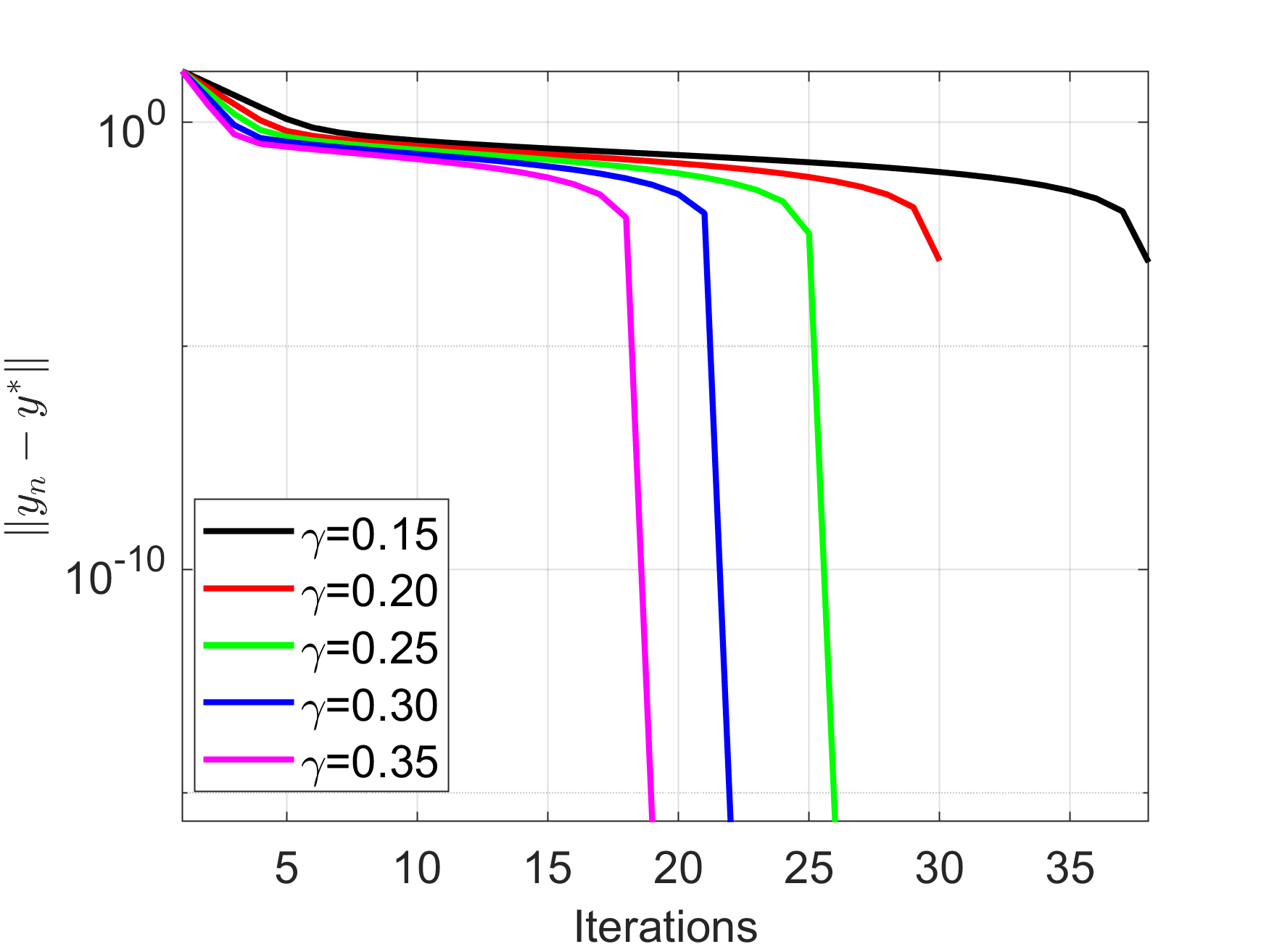} }}%
\caption{Effects of $\eta$ and $\gamma$ when $\alpha>0$.}%
\label{fig:eta_effect_2}%
\end{figure}

\subsection{Low-rank matrix recovery on synthetic data and real power system load data}
\label{subsec_LRMC}

We revisit the problem introduced in Example~\ref{ex:MAT_REC}. In this case, $f(X)=\frac{1}{2}\|\mathcal{P}_{\Omega} (X)- \mathcal{P}_{\Omega}(M)\|^2$, $g(X)=\iota_{\mathcal{C}(r)}(X)$, $\bar{h}(X)=\frac{\rho}{2}\|X\|_F^2$, and $\ubar{h} \equiv 0 $. We set $\theta=1$, $\alpha =0$, $\rho=1.8\times10^{-6}$ (since the value of the term $\bar{h}(X)$ is much larger compared to that of the term $f(X)$), $\ell=\rho$, and $\kappa=1$. The updating steps of Algorithm~\ref{algo:DRFDR} in this case become
\begin{align*}
\left\{
\begin{aligned}
&X_{n+1}^{ij} =\begin{cases}
\frac{1}{1+\gamma}(Z_n^{ij} +\gamma M^{ij}) &\text{if~} (i,j) \in \Omega,\\
Z_n^{ij} &\text{if~} (i,j) \notin \Omega,
\end{cases}\\
&Y_{n+1} =P_{\mathcal{C}(r)}((2-\gamma\rho)X_{n+1}-Z_n),\\
&Z_{n+1} =Z_n+\eta (Y_{n+1}-X_{n+1}),
\end{aligned}
\right.
\end{align*}
where $P_{\mathcal{C}(r)}(X)=U\sum_rV^\top=U\text{diag}(\{\sigma_i\}_{1\leq i \leq r})V^\top$. We employ the heuristic in \cite{LP16} to set the parameter $\gamma$. First, initialize $\gamma = k \gamma_0$ and update $\gamma=\max \{\gamma/2,0.9999\gamma\}$ whenever $\gamma>\gamma_0$ and the sequence $X_n$ satisfies $\|X_{n+1}-X_n\|_F>1000/n$ or $\|X_n\|_F>10^{10}$. We choose $\eta=1.8$, then in view of Theorem~\ref{t:subcvg}, $\overline{\gamma}\approx 0.32$, so we set $\gamma_0 =0.2$ for our algorithm. We compare our algorithm with the forward-backward splitting (FBS), the Douglas--Rachford splitting applied to \eqref{ex:mat_recover_re} (denoted as DRS) and to \eqref{ex:mat_recover_mod} (denoted as DRSR), and the Davis--Yin splitting (DYS), all of which are considered in \cite{Bian2021}. According to \cite{Bian2021}, $\gamma_0= 0.22$ for the DRS and DRSR, and $\gamma_0= 0.15$ for the DYS. The DRS, DRSR, and DYS used the same heuristic described above to update $\gamma$, while $\gamma=2/3$ for the FBS. We let $k=10^6$ for the DRS, DRSR, DYS, and the DRFDR. All the singular value decomposition (SVD) involved in this experiment were conducted by using PROPACK toolbox.

Let us now assess the performance of the DRFDR on randomly generated data. We generate $n \times  n$ matrices of rank $r$ by sampling two $n \times  r$ matrices $M_1$ and $M_2$ independently, each having i.i.d Gaussian entries, and let $M = M_1M_2^{\top}$. The set of observed entries $\Omega$  is sampled uniformly at random among all sets of cardinality $m$. The sampling ratio is defined as $\mathcal{R}=m/n^2$. We run all the algorithms, initialized at the sampled matrix, for a maximum of 2000 iterations. All the algorithms are run for 30 times, at each time $M$ and $\Omega$ are randomly generated. The stopping criterion for all algorithms is $\frac{\|\mathcal{P}_{\Omega}(Y_n-M)\|}{\|\mathcal{P}_{\Omega}(M)\|}<10^{-4}$ and the evaluation metric is relative error (RE)$:= \frac{\|Y_{\text{opt}}-M\|_F}{\|M\|_F}$. Table~\ref{tab:LRMC_random_data} reports the CPU time, the number of iterations, the RE at termination. The DRFDR outperforms all of the remaining algorithms in terms of CPU time and RE, and also requires fewer iterations to converge.

\begin{table}[!htbp]
\scriptsize
\caption{Results of 30 randomly generated instances, low-rank matrix recovery}
\label{tab:LRMC_random_data}
\begin{center}
\setlength{\tabcolsep}{1.2pt}
\resizebox{\columnwidth}{!}{
\begin{tabular}{cccccccccccccccc}
\hline
\multicolumn{1}{l|}{}      & \multicolumn{5}{c|}{CPU time (seconds)}                                                                                                    & \multicolumn{5}{c|}{Iterations}                                                                                                            & \multicolumn{5}{c}{RE}                                                                                                                    \\ \hline
\multicolumn{16}{c}{$\mathcal{R}=0.1$, rank 10}                                                                                                                                                                                                                                                                                                                                                                                                                            \\ \hline
\multicolumn{1}{c|}{Size$^*$}  & \multicolumn{1}{c}{FBS} & \multicolumn{1}{c}{DRSR} & \multicolumn{1}{c}{DRS} & \multicolumn{1}{c}{DYS} & \multicolumn{1}{c|}{\textbf{DRFDR}} & \multicolumn{1}{c}{FBS} & \multicolumn{1}{c}{DRSR} & \multicolumn{1}{c}{DRS} & \multicolumn{1}{c}{DYS} & \multicolumn{1}{c|}{\textbf{DRFDR}} & \multicolumn{1}{c}{FBS} & \multicolumn{1}{c}{DRSR} & \multicolumn{1}{c}{DRS} & \multicolumn{1}{c}{DYS} & \multicolumn{1}{c}{\textbf{DRFDR}} \\ \hline
\multicolumn{1}{c|}{5000}  & 122.3& 37.3& 162.8& 33.9& \multicolumn{1}{c|}{16.7}          & 178& 45& 212& 45& \multicolumn{1}{c|}{22}            & 1.20E-04& 1.02E-04& 1.22E-04& 9.87E-05& 8.81E-05
\\
\multicolumn{1}{c|}{8000}  & 297.8& 98.2& 396.6& 85.9& \multicolumn{1}{c|}{41.6}          & 163& 44& 194& 44& \multicolumn{1}{c|}{21}            & 1.13E-04& 1.03E-04& 1.12E-04& 1.01E-04& 8.20E-05
\\
\multicolumn{1}{c|}{12000} & 620.9& 214.9& 823.5& 193.3& \multicolumn{1}{c|}{92.9}          & 155& 44& 185& 44& \multicolumn{1}{c|}{21}            & 1.07E-04& 9.05E-05& 1.08E-04& 9.08E-05& 6.86E-05\\ \hline
\multicolumn{16}{c}{$\mathcal{R}=0.15$, rank 10}                                                                                                                                                                                                                                                                                                                                                                                                                           \\ \hline
\multicolumn{1}{c|}{5000}  & 74.0& 33.8& 100.5& 30.5& \multicolumn{1}{c|}{17.2}          & 107& 40& 129& 40& \multicolumn{1}{c|}{22}            & 1.11E-04& 8.53E-05& 1.08E-04& 9.22E-05& 7.65E-05
\\
\multicolumn{1}{c|}{8000}  & 182.9& 90.7& 255.5& 81.4& \multicolumn{1}{c|}{43.3}          & 100& 39& 122& 40& \multicolumn{1}{c|}{21}            & 1.09E-04& 9.62E-05& 1.03E-04& 8.36E-05& 8.37E-05
\\
\multicolumn{1}{c|}{12000} & 399.6& 201.2& 546.4& 176.8& \multicolumn{1}{c|}{95.5}         & 97& 39& 118& 39& \multicolumn{1}{c|}{21}            & 1.01E-04& 8.36E-05& 9.90E-05& 1.01E-04& 6.74E-05\\ \hline
\multicolumn{16}{c}{$\mathcal{R}=0.1$, rank 15}                                                                                                                                                                                                                                                                                                                                                                                                                            \\ \hline
\multicolumn{1}{c|}{5000}  & 155.2& 44.6& 207.2& 41.1& \multicolumn{1}{c|}{20.4}          & 193& 46& 230& 46& \multicolumn{1}{c|}{23}            & 1.26E-04& 1.03E-04& 1.27E-04& 1.00E-04& 9.95E-05
\\
\multicolumn{1}{c|}{8000}  & 367.5& 112.7& 473.2& 101.2& \multicolumn{1}{c|}{50.4}          & 173& 45& 204& 45& \multicolumn{1}{c|}{22}            & 1.16E-04& 9.93E-05& 1.17E-04& 9.60E-05& 7.57E-05
\\
\multicolumn{1}{c|}{12000} & 773.7& 252.0& 1029.0& 222.0& \multicolumn{1}{c|}{112.0}         & 161& 44& 192& 44& \multicolumn{1}{c|}{21}            & 1.13E-04& 1.01E-04& 1.09E-04& 1.00E-04& 8.04E-05\\ \hline
\multicolumn{16}{c}{$\mathcal{R}=0.15$, rank 15}                                                                                                                                                                                                                                                                                                                                                                                                                           \\ \hline
\multicolumn{1}{c|}{5000}  & 99.4& 41.0& 130.1& 38.1& \multicolumn{1}{c|}{21.2}          & 114& 40& 135& 40& \multicolumn{1}{c|}{22}            & 1.15E-04& 9.86E-05& 1.18E-04& 1.01E-04& 9.16E-05
\\
\multicolumn{1}{c|}{8000}  & 231.5& 103.2& 301.1& 94.7& \multicolumn{1}{c|}{52.5}          & 105& 40& 126& 40& \multicolumn{1}{c|}{22}            & 1.08E-04& 8.45E-05& 1.09E-04& 9.22E-05& 7.17E-05
\\
\multicolumn{1}{c|}{12000} & 479.0& 218.6& 627.7& 204.6& \multicolumn{1}{c|}{108.8}         & 100& 39& 121& 40& \multicolumn{1}{c|}{21}            & 1.04E-04& 9.66E-05& 1.02E-04& 8.38E-05& 7.96E-05\\ \hline
\multicolumn{16}{l}{$^*$Size $m$ means that the matrix $M$ is $m \times m$.}
\end{tabular}
}
\end{center}
\end{table}

Next, we evaluate the performance of the DRFDR on a real dataset. The dataset is taken from the ``2023 Distribution zone substation data'', which is publicly published by Ausgrid\footnote{The dataset can be found at \url{https://www.ausgrid.com.au/Industry/Our-Research/Data-to-share/Distribution-zone-substation-data}.}. We take the load data from five 11kV meters, namely ``Beacon Hill'', ``Botany'', ``City East'', ``Belrose'', and ``Surry Hill'', to form a $480 \times 364$ matrix. As studied in \cite{CSSE,Cai2018_IEEE,Wang2015}, load data and PMU data have low rank property, and can be compressed using SVD with negligible error. Hence, we compress our data into a 20-rank matrix using the SVD. We then evaluate the performance of all algorithms on this 20-rank matrix. Set $\gamma_0=0.22$ and $\eta=1.8$ for the DRFDR, while the parameters of the remaining algorithms are the same ones as before. Let $k=10$ for the DRS, DRSR, DYS, and the DRFDR. We run all the algorithms for 30 times, at each time we create a random $\Omega$. Table~\ref{tab:LRMC_load_data} shows the average results of 30 runs. It can be seen that on this real dataset, the DRFDR still outperforms all of the remaining algorithms, on different sampling ratios. Note that the sampling ratio in this case is $\mathcal{R}=m/(n_1n_2)$, where $(n_1, n_2) = (480, 364)$. In Figure~\ref{fig:load_plot}, we plot one representative row in the matrix recovered by the DRFDR (when $\mathcal{R}=0.4$) versus its corresponding ground truth value, and this figure shows that the accuracy of the recovering process is high.

\begin{table}[H]
\scriptsize
\caption{Results of 30 runs on power system load dataset}
\label{tab:LRMC_load_data}
\begin{center}
\setlength{\tabcolsep}{1.2pt}
\resizebox{\columnwidth}{!}{
\begin{tabular}{c|ccccc|ccccc|ccccc}
\hline
                         & \multicolumn{5}{c|}{CPU time (seconds)}                                                                                                     & \multicolumn{5}{c|}{Iterations}                                                                                                            & \multicolumn{5}{c}{RE}                                                                                                                    \\ \hline
\multicolumn{1}{c|}{$\mathcal{R}$} & \multicolumn{1}{c}{FBS} & \multicolumn{1}{c}{DRSR} & \multicolumn{1}{c}{DRS} & \multicolumn{1}{c}{DYS} & \multicolumn{1}{c|}{\textbf{DRFDR}} & \multicolumn{1}{c}{FBS} & \multicolumn{1}{c}{DRSR} & \multicolumn{1}{c}{DRS} & \multicolumn{1}{c}{DYS} & \multicolumn{1}{c|}{\textbf{DRFDR}} & \multicolumn{1}{c}{FBS} & \multicolumn{1}{c}{DRSR} & \multicolumn{1}{c}{DRS} & \multicolumn{1}{c}{DYS} & \multicolumn{1}{c}{\textbf{DRFDR}} \\ \hline
0.4& 21.47& 22.64& 20.76& 21.98& 13.55& 2000& 1863& 1816& 1981& 1220& 3.79E-02& 1.58E-02& 1.37E-02& 2.38E-02& 2.95E-03
\\
0.5& 21.06& 10.42& 9.07& 12.61& 4.99& 1839& 797& 749& 1064& 425& 3.07E-03& 2.20E-04& 1.99E-04& 2.43E-04& 1.61E-04
\\
0.6& 11.29& 6.08& 5.45& 6.26& 2.80& 910& 440& 427& 499& 220& 2.05E-04& 1.56E-04& 1.51E-04& 1.91E-04& 1.22E-04
\\
0.7& 4.37& 3.91& 3.61& 3.19& 1.12& 348& 269& 272& 244& 90& 1.74E-04& 1.29E-04& 1.28E-04& 1.45E-04& 1.12E-04\\ \hline
\end{tabular}
}
\end{center}
\end{table}

\begin{figure}[H]
\centering
\includegraphics[width=0.6\linewidth]{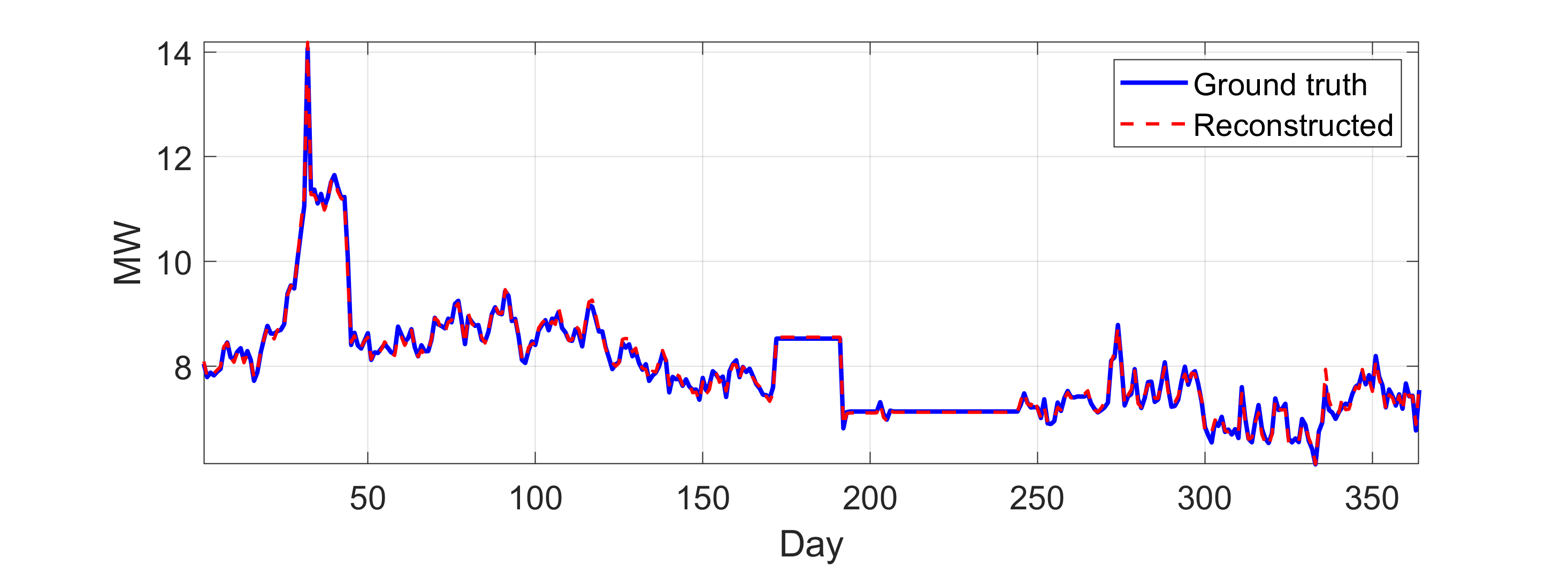}
\caption{Reconstructed load by DRFDR versus ground truth load.}
\label{fig:load_plot}
\end{figure}

\subsection{Image reconstruction via compressed sensing approach}

In this experiment, we aim to recover an image from an incomplete image, taking advantage of the sparsity in the image’s representation. Sparse representations are particularly advantageous in image reconstruction as they allow for efficient encoding of images using only a small number of significant coefficients. Such representations can be obtained via discrete cosine transform\footnote{\url{https://www.mathworks.com/help/images/discrete-cosine-transform.html}}. To reconstruct the image, one can formulate and solve the optimization problem
\begin{align}\label{ex:CSM}
\min_{x\in \mathbb{R}^{d}}~~\frac{1}{2}\|Ax-b\|^2 + \rho (\|x\|_1-\|x\|), \tag{CS}
\end{align}
where $\rho \in (0, +\infty)$ is a regularization parameter, $A\in \mathbb{R}^{m\times d}$ is a sensing matrix, and $b \in \mathbb{R}^{d}\smallsetminus \{0\}$. In this case, $A=S\Psi$ with $S$ being the sampling matrix and $\Psi$ is the inverse discrete cosine transform matrix, $b$ is the vectorized observed image. For a concrete example, given a ground truth image $I$ and its observed image with missing value $\Tilde{I}$ as
\begin{align*}
    I=\begin{bmatrix}
    i_1 & i_4 & i_7\\
    i_2 & i_5 & i_8 \\
    i_3 & i_6 & i_9
\end{bmatrix}
\text{~~and~~}
\Tilde{I}=\begin{bmatrix}
    i_1 & \times & i_7 \\
    \times & \times & \times \\
    \times & \times & i_9
\end{bmatrix}.
\end{align*}
Then $b=[i_1,i_7,i_9]^{\top}$ and 
\begin{align*}
S=\begin{bmatrix}
    1 & 0 & 0 & 0 & 0 & 0 & 0 & 0 & 0 \\
    0 & 0 & 0 & 0 & 0 & 0 & 1 & 0 & 0 \\
    0 & 0 & 0 & 0 & 0 & 0 & 0 & 0 & 1
\end{bmatrix}.    
\end{align*}
To use our proposed DRFDR algorithm on the problem \eqref{ex:CSM}, we let $f(x) = \frac{1}{2}\|Ax-b\|^2$, $g(x)=\rho \|x\|_1$, $\ubar{h}(x)=\rho\|x\|$, and $\bar{h} \equiv 0$ (hence $\ell=0$). From this, we set $\theta=1$, $\alpha =0$,  $\eta=1.8$, $\kappa=\lambda_{\max}(A^\top A)$, $\gamma$ is set similarly to the procedure in in Section~\ref{subsec_LRMC} with $k=200$. Now, the updating steps of the DRFDR read as
\begin{align*}
\begin{cases}
x_{n+1} &=(A^\top A + \frac{1}{\gamma}I)^{-1}(A^\top b +\frac{z_n}{\gamma}),\\
y_{n+1} &=\prox_{\gamma\rho\|\cdot\|_1}(2x_{n+1} - z_n + \gamma \rho y^*_n),\\
z_{n+1} &=z_n +\eta(y_{n+1} -x_{n+1}),
\end{cases}
\end{align*}
where $\prox_{\gamma\rho\|\cdot\|_1}$ and $y^*_n$ are as defined in \eqref{eq:proxL1} and \eqref{eq:y*n}.

We compare our algorithm with the unified Douglas--Rachford algorithm for DC programming (denoted as DRDC) in \cite{Chuang2021} and the ADMM with fast $L_1-L_2$ proximal operator in \cite{Lou2017}. Their settings are taken from their corresponding papers. Three test images are chosen for this experiment, namely ``MRI'', ``Cameraman'', and ``Moon'', which are all available inside MATLAB and have different pixel distributions. We set $\rho=10^{-4}$ and run all algorithms, initialized at zero, for a maximum of 3000 iterations. The stopping condition for all algorithms is $\frac{\|y_{n+1}-y_n\|}{\|y_n\|}<10^{-5}$. The experiment is repeated for 30 times. For each time, $\mathcal{R}$ percent of the entries in the image are randomly removed to create the observed image $\Tilde{I}$. After solving \eqref{ex:CSM}, the reconstructed image $\widehat{I}$ is obtained via reshaping $\Psi x$ into 2D form. This process is illustrated in Figure~\ref{fig:image_recon}. The evaluation metric used in this experiment is $RE:= \|I-\widehat{I}\|_F/\|I\|_F$. Table~\ref{tab:result_image} reports the results of this experiment, and it is seen that the DRFDR outperforms its competitors in terms of CPU time, and also slightly outperforms them in terms of RE.

\begin{figure}[htpb]
\centering
\includegraphics[width=\linewidth]{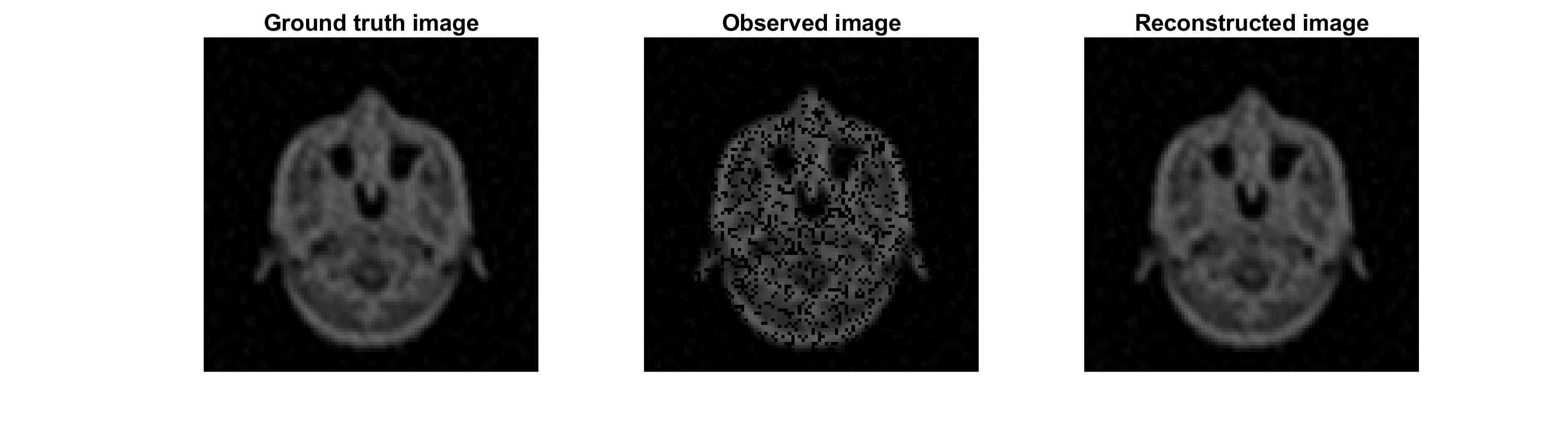}
\caption{An illustration of the experiment on ``MRI'' image, here $\mathcal{R}=25\%$.}
\label{fig:image_recon}
\end{figure}

\begin{table}[htpb]
\scriptsize
\caption{Results over 30 random runs, image reconstruction}
\label{tab:result_image}
\begin{center}
\setlength{\tabcolsep}{1.2pt}
\resizebox{\columnwidth}{!}{
\begin{tabular}{cccccccccc}
\hline
\multicolumn{1}{l|}{}              & \multicolumn{3}{c|}{CPU time (in seconds)}          & \multicolumn{3}{c|}{Iterations}                   & \multicolumn{3}{c}{RE}                 \\ \hline
\multicolumn{1}{c|}{$\mathcal{R}$} & ADMM  & DRDC  & \multicolumn{1}{c|}{\textbf{DRFDR}} & ADMM & DRDC & \multicolumn{1}{c|}{\textbf{DRFDR}} & ADMM      & DRDC      & \textbf{DRFDR} \\ \hline
\multicolumn{10}{c}{MRI}                                                                                                                                                              \\ \hline
\multicolumn{1}{c|}{10\%}          & 38.17 & 38.00 & \multicolumn{1}{c|}{27.13}          & 235  & 232  & \multicolumn{1}{c|}{170}            & 8.414E-03 & 8.403E-03 & 8.403E-03      \\
\multicolumn{1}{c|}{15\%}          & 38.06 & 36.01 & \multicolumn{1}{c|}{26.03}          & 255  & 242  & \multicolumn{1}{c|}{177}            & 1.086E-02 & 1.086E-02 & 1.086E-02      \\
\multicolumn{1}{c|}{20\%}          & 31.90 & 29.16 & \multicolumn{1}{c|}{22.00}          & 269  & 251  & \multicolumn{1}{c|}{184}            & 1.323E-02 & 1.324E-02 & 1.324E-02      \\
\multicolumn{1}{c|}{25\%}          & 27.25 & 24.94 & \multicolumn{1}{c|}{18.30}          & 283  & 260  & \multicolumn{1}{c|}{191}            & 1.572E-02 & 1.571E-02 & 1.571E-02      \\ \hline
\multicolumn{10}{c}{Cameraman}                                                                                                                                                        \\ \hline
\multicolumn{1}{c|}{10\%}          & 20.35 & 20.55 & \multicolumn{1}{c|}{16.88}          & 149  & 149  & \multicolumn{1}{c|}{125}            & 2.021E-03 & 2.020E-03& 2.020E-03\\
\multicolumn{1}{c|}{15\%}          & 22.06 & 20.93 & \multicolumn{1}{c|}{17.55}          & 167  & 160  & \multicolumn{1}{c|}{136}            & 2.519E-03 & 2.515E-03 & 2.513E-03      \\
\multicolumn{1}{c|}{20\%}          & 21.66 & 19.87 & \multicolumn{1}{c|}{17.27}          & 184  & 170  & \multicolumn{1}{c|}{149}            & 3.017E-03 & 3.014E-03 & 3.014E-03      \\
\multicolumn{1}{c|}{25\%}          & 21.43 & 19.50 & \multicolumn{1}{c|}{17.44}          & 200  & 182  & \multicolumn{1}{c|}{162}            & 3.690E-03 & 3.682E-03 & 3.681E-03      \\ \hline
\multicolumn{10}{c}{Moon}                                                                                                                                                             \\ \hline
\multicolumn{1}{c|}{10\%}          & 25.26 & 24.51 & \multicolumn{1}{c|}{19.58}          & 184  & 183  & \multicolumn{1}{c|}{144}            & 4.909E-03 & 4.909E-03 & 4.906E-03      \\
\multicolumn{1}{c|}{15\%}          & 23.29 & 22.39 & \multicolumn{1}{c|}{17.89}          & 200  & 193  & \multicolumn{1}{c|}{154}            & 6.297E-03 & 6.302E-03 & 6.297E-03      \\
\multicolumn{1}{c|}{20\%}          & 22.82 & 21.39 & \multicolumn{1}{c|}{17.20}          & 217  & 203  & \multicolumn{1}{c|}{164}            & 7.436E-03 & 7.433E-03 & 7.433E-03      \\
\multicolumn{1}{c|}{25\%}          & 21.74 & 20.04 & \multicolumn{1}{c|}{16.41}          & 229  & 211  & \multicolumn{1}{c|}{173}            & 8.770E-03 & 8.770E-03 & 8.770E-03      \\ \hline
\end{tabular}
}
\end{center}
\end{table}

\subsection{Simultaneously sparse and low-rank matrix estimation}

In this section, we consider the problem introduced in Example~\ref{ex:low_rank_and_sparse}. Let 
$f(X) =\frac{1}{2}\|X-A\|_F^2$ ($\kappa =1$), $g(X) =\rho_1 \|X\|_1$, $\bar{h}(X) =\rho_2 \|X\|_F^2$ ($\ell =2\rho_2$), and $\ubar{h}(X)=\rho_2\tnorm{X}^2_{k,2}$. We choose $\theta=1$, $\rho_1=\rho_2=0.1$, and $\eta=1.4$. The updating of the DRFDR in this case is
\begin{align*}
\left\{
\begin{aligned}
&X_{n+1} = \frac{\gamma}{1+\gamma}(A+Z_n), \\
&Y_{n+1} = \prox_{\gamma\rho_1\|\cdot\|_1}(2X_{n+1} -Z_n -\gamma \nabla \bar{h}(X_{n+1}) +\gamma Y^*_n), \\
&Z_{n+1} =Z_n +\eta(Y_{n+1} -X_{n+1}).
\end{aligned}
\right.
\end{align*}
where $\prox_{\gamma\rho_1\|\cdot\|_1}$ is the soft shrinkage operator imposed on all the entries of the input matrix. The subgradient $Y^*_n$ is calculated as described in \cite[Proposition~1]{Shi2022}. We consider two cases when $\alpha=0$ and $\alpha=1$. By Theorem~\ref{t:subcvg}, $\overline{\gamma}\approx 0.4167$ when $\alpha=0$, and $\overline{\gamma}\approx 0.7385$ when $\alpha=1$. Let $\gamma=\overline{\gamma} -10^{-20}$. We first generate a block diagonal matrix $A_G$, formed by five smaller, randomly generated square matrices of size $300\times 300$, $400\times 400$, $100\times 100$, $200\times 200$, and $100\times 100$. Each of them is formed by $vv^{\top}$ where the entries of $v$ are drawn i.i.d. from the uniform distribution on $[-1,1]$. Hence, all the submatrices have rank 1. We then randomly choose $\mathcal{R}$ percent of the entries of $A_G$ and add Gaussian noise to them to form the noisy input matrix $A$ (this means that $\mathcal{R}$ percent of the entries of $A_G$ are corrupted by noise). 

We compare the DRFDR with the following algorithms
\begin{itemize}
    \item Incremental Proximal Descent (IPD) in \cite{estimation_lrsm},
    \item Generalized Proximal Point Algorithm (GPPA) \cite{An2016}, which is a form of proximal DCA. To use the GPPA, we let $g_1(X)=\rho_1 \|X\|_1$, $g_2(X)= \frac{1}{2}\|X-A\|_F^2 + \rho_2 \|X\|_F^2$ (Lipschitz constant is $1+2\rho_2$), and $h(X)=\tnorm{X}^2_{k,2}$.
\end{itemize}
All algorithms are run for 30 runs over 30 randomly generated instances, initialized at $A$, and for each run we regenerate $A$. The maximum iteration is 2000 and the stopping condition for both algorithms is $\frac{\|Y_{n+1}-Y_n\|_F}{\|Y_n\|_F}<10^{-6}$. The proximal stepsize for the IPD is set to $0.5$. The evaluation metrics in this case is the relative error $\text{RE} := \frac{\|Y_{\text{opt}}-A_G\|_F}{\|A_G\|_F}$. Table~\ref{tab:Result_sparse_lowrank} shows that the DRFDR and the GPPA outperform the IPD, and it also shows that the new formulation \eqref{prob:SLRME_2}, which is solved by the DRFDR and GPPA, can yield competitive solutions compared to those of the conventional \eqref{prob:SLRME_1} solved by the IPD. It can also be seen that larger $\alpha$ yields larger $\gamma$, which leads to faster convergence and slightly better solutions for the DRFDR. The CPU time of the DRFDR is slightly longer than that of the GPPA, and this is due to the fact that the DRFDR has to perform more updating steps for its intermediate variables during the iteration process while the GPPA only performs one proximal step. It is also observed that the quality of the solutions obtained by the DRFDR are better than those obtained by the GPPA and IPD, especially when more entries of the matrix are corrupted by noise, i.e. higher $\mathcal{R}$.  

\begin{table}[H]
\scriptsize
\caption{Average results of 30 random generated instances, sparse \& low rank matrix estimation}
\label{tab:Result_sparse_lowrank}
\begin{center}
\setlength{\tabcolsep}{1.2pt}
\resizebox{\columnwidth}{!}{
\begin{tabular}{c|lccc|lccc|lccc}
\hline
\multicolumn{1}{l|}{} &  &\multicolumn{3}{c|}{CPU time (seconds)} &  &\multicolumn{3}{c|}{Iterations} &  &\multicolumn{3}{c}{RE} \\ \hline
\multirow{2}{*}{$\mathcal{R}$}                     &  \multirow{2}{*}{GPPA}&\multirow{2}{*}{IPD}           & \textbf{DRFDR}  & \textbf{DRFDR}        &  \multirow{2}{*}{GPPA}&\multirow{2}{*}{IPD}            & \textbf{DRFDR}  & \textbf{DRFDR}            &  \multirow{2}{*}{GPPA}&\multirow{2}{*}{IPD}        & \textbf{DRFDR}  & \textbf{DRFDR}    \\
 ~&  &~& ($\alpha=0$) & ($\alpha=1$) &  &~ &  ($\alpha=0$) & ($\alpha=1$)~ &  &~&  ($\alpha=0$) & ($\alpha=1$) 
\\ \hline
15\%                  &  2.01&4.18& 3.04&   2.39&  8&17& 12&  10&  6.05E-01&7.17E-01& 6.66E-01& 5.71E-01
\\
20\%                  &  2.07&4.56& 3.15& 2.68&  8&17& 12&   10&  6.74E-01&8.00E-01& 6.83E-01& 6.19E-01
\\
25\%                  &  2.03&4.56& 3.08&  2.59&  8&17& 12&   10&  7.76E-01&9.22E-01& 7.10E-01&  6.95E-01
\\
30\%                  &  1.83&4.08& 2.81&2.33&  8&17& 12&10&  8.44E-01&1.00E+00& 7.49E-01& 7.31E-01
\\

 35\%                  & 1.95& 3.93& 2.80& 2.28& 7& 17& 12& 10& 9.02E-01& 1.07E+00& 7.47E-01&7.29E-01\\
 \hline
\end{tabular}
}
\end{center}
\end{table}

\section{Conclusion}
\label{sec:conclusion}

We have proposed a splitting algorithm for minimizing the sum of three nonconvex functions, one of which is expressed in a DC form. The sequence of iterates generated by the proposed algorithm is bounded and any of its cluster points is a stationary point of the model problem. We have also established the global convergence of the whole sequence and derived the convergence rates of both the iterates and objective function values by further assuming the KL property for a suitable merit function, without any additional assumption on the concave part in the objective function. Our analysis not only extends the scope but also unifies and enhances recent convergence analyses of the Douglas--Rachford, Peaceman--Rachford, and David--Yin splitting algorithms in nonconvex settings. Intensive numerical experiments on both synthetic data and real data shows the superiority of our proposed algorithm. The experiment on the real Ausgrid dataset suggests that splitting algorithms have strong potential in developing data recovery methods for power system operators, providing a proof-of-concept for future research. The experiment on the simultaneously sparse and low-rank matrix estimation with the new nonconvex formulation also shows promising results.

\paragraph{Acknowledgements.}

The authors are grateful to Guoyin Li for his warm comments on an early version of this work and to Fengmiao Bian for sharing the MATLAB code used in \cite{Bian2021}. We are grateful to the anonymous reviewers for their constructive comments and suggestions which helped improve the manuscript.

\section*{Declarations}

\noindent\textbf{Funding.} MND was partially supported by Discovery Project DP230101749 from the Australian Research Council (ARC) and by a public grant from the Fondation Math\'ematique Jacques Hadamard (FMJH). TNP was supported by Henry Sutton PhD Scholarship Program from Federation University Australia. This research is also funded by Thuongmai University, Hanoi, Vietnam.

\noindent\textbf{Conflict of interest.} The authors declare no competing interests.

\noindent\textbf{Data availability.} All data generated or analyzed during this study are included in this article.

\end{document}